\newtheorem{thm}{Theorem}[section]
\newtheorem{lem}[thm]{Lemma}
\newtheorem{coro}[thm]{Corollary}
\newtheorem{prop}[thm]{Proposition}
\theoremstyle{definition}
\newenvironment{newlist}
   {\begin{list}{}{\setlength{\labelsep}{0.25cm}
                   \setlength{\labelwidth}{0.65cm}
                      \setlength{\leftmargin}{0.9cm}}}
   {\end{list}}
\newcommand{\dotcup}{\dot{\cup}}
\DeclareMathOperator{\IScP}{{\ope{IS} _{\mathrm{c}}\ope{P}^+}}
\newcommand{\class}[1]{\boldsymbol{\mathcal{#1}}}
\newcommand{\cat}[1]{\boldsymbol{\mathscr{#1}}}
\newcommand{\str}[1]{\mathbf{#1}}
\newcommand{\alg}[1]{\str{#1}}
\newcommand{\spc}[1]{\str{#1}}
\newcommand{\fnt}[1]{\mathsf{#1}}
\newcommand{\ope}[1]{\mathbb{#1}}
\newcommand{\CA}{{\cat A}}
\newcommand{\CB}{\cat B}
\newcommand{\duplicateV}{\cat A}
\newcommand{\baseV}{\cat B}
\newcommand{\TL}{{\cat{DT}}}  
\newcommand{\CL}{{\cat L}}   
\newcommand{\CP}{\cat P}
\newcommand{\CY}{\cat Y}
\newcommand{\CX}{\cat X}
\newcommand{\CN}{\class N}
\newcommand{\CM}{\class M}
\newcommand{\DM}{\cat{DM}} 
\newcommand{\KL}{\cat{K}} 
\newcommand{\CCD}{\cat D}
\newcommand{\DB}{\cat{DB}}
\newcommand{\DBC}{\cat{DBC}}
\newcommand{\unbounded}[1]{#1\mbox{\tiny{$u$}}}
\newcommand{\DBU}{\unbounded{\DB}}
\renewcommand{\epsilon}{\varepsilon}  
\newcommand{\twiddle}[1]{{\smash{\underset{\raise.375ex\hbox{$\smash\sim$}}
       {\mathbf{#1}}}\vphantom{\underline{\str{#1}}}}} 
\newcommand{\A}{\alg{A}}
\newcommand{\T}{\alg{T}}
\newcommand{\B}{\alg{B}}
\newcommand{\C}{\alg{C}}
\newcommand{\M}{\alg{M}}
\newcommand{\X}{\alg{X}}
\newcommand{\N}{\alg{N}}
\newcommand{\PP}{\alg{P}}  
\newcommand{\Q}{\alg{Q}}
\newcommand{\two}{\boldsymbol 2}
\newcommand{\Lalg}{\alg{L}}
\newcommand{\four}{\boldsymbol 4}
\newcommand{\Tp}{{\mathscr{T}}}
\newcommand{\Sigmavar}[1]{\Sigma_{\scriptscriptstyle #1}}
\newcommand{\Gammavari}[1]{\Gamma_{\scriptscriptstyle #1}}
\newcommand{\fourvar}[1]{\four_ {\scriptscriptstyle #1}}
\newcommand{\Svar}[1]{{\bf 16}_ {\scriptscriptstyle #1}}
\DeclareMathOperator{\ISP}{\ope{ISP}}
 \DeclareMathOperator{\HSP}{\ope{HSP}}
\DeclareMathOperator{\aaimp}{\supset}
\newcommand{\odotG}[1]{\mathrel{{\odot}_{#1}}} 
\renewcommand{\leq}{\leqslant}
\renewcommand{\geq}{\geqslant}
\newcommand{\stwiddle}[1]{\smash{\underset{\smash{\raise.1ex\hbox{\small$\approx$}}}
                         {\str{#1}}}\vphantom{#1}}
\newcommand{\D}{\fnt D}
\newcommand{\E}{\fnt E}
\newcommand{\MT}{\twiddle{\spc{M}}}
\newcommand{\NT}{\twiddle{\spc{N}}}
\newcommand{\twoT}{\twiddle 2}
\newcommand{\Y}{\spc{Y}}
\begin{document}
\title[Natural dualities through product representations]{Natural dualities through product representations: bilattices and
beyond}

\author{L.\,M.\,Cabrer}
\address[L.\,M.\,Cabrer]{Institute of Computer Languages, 
Technische Universit\"at Wien,
Favoritenstrasse 9-11, A-1040 Wien, Austria}
\email{leonardo.cabrer@logic.at}

\author{H.\,A. Priestley}
\address[H.\,A. Priestley]{
Mathematical Institute, University of Oxford, Radcliffe Observatory Quarter,
Oxford OX2 6GG, United Kingdom}
\email{hap@maths.ox.ac.uk}

\begin{abstract}
This paper focuses on natural dualities for varieties of bilattice-based algebras.
Such varieties have been widely studied as semantic models  
in  situations where information is incomplete or inconsistent. 
The most popular tool for  studying bilattices-based algebras is product representation.
The 
authors recently set up a widely applicable algebraic framework which enabled 
product representations over a base variety  to be derived in a uniform
and categorical  manner.  
By combining this methodology with that of natural duality theory, we demonstrate how to build   a natural duality 
for any bilattice-based variety which has a  suitable product representation over a 
dualisable base variety.  This procedure allows us  systematically to present economical 
natural dualities   for many bilattice-based varieties, for most of which 
no dual representation has previously been given. 
 Among our results we highlight that for bilattices with  a generalised 
 conflation  operation (not assumed to
be an involution or  
 commute with negation).  
 Here both   the associated product  representation and the duality are new.
Finally we outline analogous procedures for pre-bilattice-based algebras (so negation is absent).
 \end{abstract}


\subjclass[2010]{primary:
08C20,
secondary
03G10, O3G25,
06B10,
 06D50. }
\maketitle
\noindent{\bf Keywords}: product representation,  
natural duality,
bilattice,  
conflation,
double  Ockham algebra.

\section{Introduction} \label{sec:intro}

 Bilattices, with and without additional operations, 
have been identified by researchers in artificial intelligence
 and in philosophical logic as of value for analysing  scenarios
   in which  information may be incomplete or inconsistent.
  Over twenty years, a bewildering array of different mathematical 
models has been developed
which employ  bilattice-based algebras in such situations;  
\cite{Gin,MaTr,FitAnnProg,RF} give just a sample 
of the literature.  
Within a logical context,
bilattices have been used to interpret truth values of 
 formal systems.
The range of possibilities is illustrated by    
 \cite{ND3,AA1, F97,G99, Fnice, BR11, SW11, OW15}.

To date, the  structure theory of bilattices has had two main strands:  
 product representations 
(see in particular \cite{BJR11,D13,prod} and references therein)   
and topological duality theory \cite{MPSV,JR,CPOne}.  
In this paper we entwine these two  strands, demonstrating how 
a dual representation and a product representation can be expected to fit together
and to operate in a symbiotic way.  Our work on distributive bilattices in  \cite{CPOne} provides a prototype.   
Crucially, 
as in~\cite{CPOne}, we exploit the theory of natural dualities; see Section~\ref{Sec:NatDual}.

In \cite{prod} we set up  a uniform framework for product 
representation.   
We introduced a formal definition of \emph{duplication}
of a base variety of algebras which gives rise to a new variety 
with  additional  operations built by  combining  suitable  
algebraic terms in the base language and coordinate manipulation 
(details are recalled in Section~\ref{Sec:Prelim}).  
This construction  led to a very general categorical 
theorem on product representation \cite[Theorem~3.2]{prod} 
which  makes overt the intrinsic structure of such representations.  
The examples we present below all involve bilattice-based varieties, 
but we stress that the scope of the theorem is not confined to such varieties.   
Our Duality Transfer Theorem (Theorem~\ref{thm:TransferNaturalDualities}) 
demonstrates how  
a natural duality for a  given base class immediately yields  a natural duality  
for any duplicate of that class.  
Moreover, the dualities for duplicated varieties 
mirror those for the base varieties,
 as regards both advantageous properties
and  complexity (note the concluding remarks in Section~\ref{NatViaDupl}). 
By combining  the Duality Transfer Theorem  with product representation
we can set up  dualities for assorted bilattice-based varieties 
(see Section~\ref{NatViaDupl}, Table~1).  
In almost all cases the dualities are new.
The varieties in question arise as duplicates of~$\CB$ (Boolean algebras), 
$\CCD$ (bounded distributive lattices)~$\KL$ (Kleene algebras), 
$\DM$ (De Morgan algebras), and  $\DB$ (bounded distributive bilattices),  
all of which have amenable natural dualities 
(see \cite{CD98} and also \cite{CPOne}).   
Variants are available  when lattice bounds are omitted.

We contrast  key features of our natural duality approach with  
earlier work on dualities for bilattice-based algebras.  
We stress that our methods lead directly to  dual representations 
which are categorical: morphisms do not have to be treated 
case-by-case as an overlay to an object representation 
(as is done in \cite{MPSV,JR}). 
Others'  work on dualities in the context of distributive 
bilattices has sought instead, for a chosen class of algebras, 
 a dual category  which is  an enrichment of a subcategory of Priestley spaces,
that is, they start from Priestley duality, applied to the distributive lattice reducts of their algebras,   
and then superimpose  extra structure to capture the suppressed operations. 
This strategy has  been successfully applied to very many 
 classes  of distributive-lattice-based algebras, but it has  drawbacks.  
Although  the underlying Priestley duality is natural,   the enriched Priestley
space representation rarely is.  
Accordingly one cannot expect 
the rewards a natural duality offers, such as instant access to free algebras.

Section~\ref{sec:Conflation} focuses on  
 the variety $\DB_{\boldsymbol -} $ of 
(bounded) distributive bilattices with a  conflation 
 operation~$-$ which is not assumed 
 to be an involution or to 
commute with the negation.  
 This variety  has not been investigated before
 and would not have  susceptible  to  earlier methods.  
We realise $\DB_{\boldsymbol -}$ as a duplicate of the variety $\cat{DO}$
of double Ockham algebras and set up  a natural duality for~$\cat{DO}$, 
whence we obtain a duality for~$\DB_{\boldsymbol -}$.  
Both results are new.
This example is also a novelty within bilattice theory since it 
takes us outside the realm of finitely generated varieties without 
losing the benefits of having a natural duality.

In Section~\ref{Sec:prebilat} we consider the negation-free 
setting of pre-bilattice-based algebras, and  link the ideas of \cite[Section~9]{prod} 
with dual representations.  
Again, a very general theorem enables us
to transfer a known duality from a base variety to a suitably constructed duplicate. 
 Here multisorted duality theory  is needed. 
  Nonetheless the ideas and the categorical arguments are simple, and 
the proof of Theorem~\ref{thm:TransferNaturalDualities}
is easily adapted.


\section{The general product representation theorem recalled }
\label{Sec:Prelim}

We shall assume that readers are familiar  with the basic notions concerning 
bilattices.
A summary can be found, for example, in \cite{BJR11} and a bare minimum in
 \cite[Section~2]{prod}.  
Here we simply draw attention to some salient points 
concerning notation  and  terminology since usage in the literature varies.  
Except in Section~\ref{Sec:prebilat} we  assume  
that a negation operator is present.

A \emph{\upshape{(}unbounded\upshape{)} bilattice} is an algebra
$\A = (A; \lor_t,\land_t,\lor_k,\land_k, \neg)$, where the reducts
$\A_t\coloneqq   (A; \lor_t,\land_t)$ and $\A_k \coloneqq  (A; \lor_k,\land_k)$ 
are lattices (respectively the \emph{truth lattice} and \emph{knowledge lattice}).  
The operation  $\neg$, capturing negation, is an endomorphism of $\A_k$ 
and a dual endomorphism of $\A_t$.

Bilattice models come in two flavours:  with and without bounds.  
Which flavour is preferred (or appropriate) may depend on  an intended application, 
or on mathematical considerations.  
We refer to \cite[Section~1]{CPOne} for the formal 
definition of the terms  \emph{bounded} and \emph{unbounded}.  
Here we merely issue a reminder that when universal bounds for the lattice order are not included 
in the algebraic language for a class of lattice-based algebras then the algebras
involved may, but need not, have bounds; when bounds do exist these
do not have to be preserved by homomorphisms.   
A subscript $_u$ on the symbol denoting a category will indicate 
that we are working in the unbounded setting.  
So, for example, $\CCD$ denotes the category of 
bounded distributive lattices and $\CCD_u$ the category of 
all distributive lattices.

All the bilattices considered in this paper are distributive, meaning that each 
 of the four lattice operations distributes over each of the other three. 
The weaker condition of  interlacing is 
necessary and sufficient for a bilattice to have a product representation.  
 However varieties of interlaced bilattice-based algebras
seldom come within the scope of natural duality theory.

Our investigations involve classes of algebras, viewed both  algebraically and categorically.  
We  draw, lightly,  on some of the basic formalism and theory of universal algebra, 
specifically regarding varieties ({\it alias} equational classes) and prevarieties;
 a standard reference for this material is \cite{BS}. 
 A class of algebras over a common language
 will be regarded as a category in the usual way: 
the morphisms  are all the homomorphisms. 
The variety generated by a family $\CM$ of algebras of common type
is denoted  $\mathbb V(\CM)$.
 Equivalently $\mathbb V(\CM)$ is the class  
$\HSP(\CM)$  of homomorphic images of subalgebras of products of algebras in $\CM$.  
The \emph{prevariety} generated by $\CM$ is the class
$\ISP(\CM)$ whose members 
are isomorphic images of subalgebras of products of members of~$\CM$. 
 Usually the algebras in~$\CM$ will be finite. 

We now recall our general product representation framework
\cite[Section~3]{prod}. 
We fix  an arbitrary  algebraic language $\Sigma$
and let $\CN$ be  a  family of $\Sigma$-algebras.   
Let $\Gamma$ be a set of pairs of $\Sigma$-terms 
such that, for $(t_1,t_2)\in\Gamma$, the terms 
$t_1$ and $t_2$ have common  even arity, 
denoted  $2n_{(t_1,t_2)}$.
We  view $\Gamma$ as  an algebraic language 
for a family of algebras  $\fnt{P}_{\Gamma}(\N)$
($\N \in \CN$), where the arity of $(t_1,t_2)\in\Gamma $ is $n_{(t_1,t_2)}$.  
We write $[t_1,t_2]$ when the pair 
$(t_1,t_2)$ is regarded  as belonging to~$\Gamma$, \textit{qua} language. 
 For $\A \in \mathbb{V}(\CN)$ we define a $\Gamma$-algebra 
$\fnt{P}_{\Gamma}(\A)
=(A\times A; \{ [t_1,t_2]^{\fnt{P}_{\Gamma}(\A)} \mid (t_1,t_2)\in\Gamma \})$,
in which the operation
$[t_1,t_2]^{\fnt{P}_{\Gamma}(\A)}\colon (A\times A)^{n}\to A\times A$ 
is  given by
\begin{multline*}
[t_1,t_2]^{\fnt{P}_{\Gamma}(\A)}
((a_1,b_1),\ldots,(a_{n},b_{n}))=\\ 
(t_1^{\A}(a_1,b_1,\ldots,a_{n},b_{n}), 
t_2^{\A} (a_1,b_1,\ldots,a_{n},b_{n})),
\end{multline*}
where  $n=n_{(t_1,t_2)}$ and   $ (a_1,b_1),\ldots,(a_{n},b_{n})\in A\times A$.
It is easy to check that
the assignment $\A\mapsto\fnt{P}_{\Gamma}(\A)$ (on objects)
and $h \mapsto h \times h$ (on morphisms) defines  a functor 
$\fnt{P}_{\Gamma}\colon \mathbb{V}(\CN) \to \mathbb{V}(\fnt{P}_\Gamma(\CN))$.
We shall also need the following notation.
Given  a set $X$ the map   $\delta^{X} \colon X\to X\times X$ 
is given by  
$\delta^{X} (x)=(x,x)$ and  $\pi_1^{X},\pi_2^{X}\colon X\times X\to X$
 denote  the projection maps.

We are ready to  recall  a key definition from~\cite[Section~3]{prod}, 
 where further details can be found.  
We  say that  $\Gamma$   \emph{duplicates} $\CN$
and that 
 $\duplicateV=\mathbb{V}(\fnt{P}_{\Gamma}(\CN))$ 
 is \emph{a duplicate of}  $\baseV$
if  the following conditions on~$\CN$ and~$\Gamma$ are satisfied:
\begin{newlist}
\item[(L)] for each $n$-ary operation symbol $f\in\Sigma$ and each $i\in\{1,2\}$ 
there exists an $n$-ary $\Gamma$-term $t$ (depending on ~$f$ and~$i$)  such that
$\pi^{N}_i\circ t^{\fnt{P}_{\Gamma}(\N)}\circ(\delta^{N})^{n}=f^{\N}$ 
for each $\N\in\CN$;  
\item[(M)] there exists a binary $\Gamma$-term $v$ such that 
$v^{\fnt{P}_{\Gamma}(\N)}((a,b),(c,d))=(a,d)$ 
for $\N\in\CN$ and $a,b\in N$;
\item[(P)] there exists a unary $\Gamma$-term $s$ such that 
$s^{\fnt{P}_{\Gamma}(\N)}(a,b)=(b,a)$ for  
$\N\in\CN$ and $a,b\in N$.
\end{newlist}

We  now present the Product Representation Theorem 
\cite[Theorem~3.2]{prod}.

\begin{thm}  \label{thm:GeneralProductEquiv}
Assume that  $\Gamma$ duplicates a class of algebras $\CN$
and let $\baseV=\mathbb{V}(\CN)$. 
Then the functor $\fnt{P}_{\Gamma}\colon \baseV\to \duplicateV$
sets up a categorical equivalence between  $\baseV$ and 
its duplicate $\duplicateV=\mathbb{V}(\fnt{P}_{\Gamma}(\CN))$.
\end{thm}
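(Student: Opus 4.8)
The plan is to establish that $\fnt{P}_{\Gamma}$ is full, faithful and essentially surjective, which is the standard criterion for a categorical equivalence. A preliminary observation underpins everything: since each generating operation of $\fnt{P}_{\Gamma}(\A)$ acts coordinatewise according to the displayed defining formula, so does every $\Gamma$-term operation, and hence each splits as a pair of $\Sigma$-term operations. Consequently the defining properties in (L), (M), (P), although stated only for members of $\CN$, amount to $\Sigma$-identities between the component terms, and these hold throughout $\baseV=\mathbb V(\CN)$; in particular the mix term $v$ of (M) and the swap term $s$ of (P) satisfy $v^{\fnt{P}_{\Gamma}(\A)}((a,b),(c,d))=(a,d)$ and $s^{\fnt{P}_{\Gamma}(\A)}(a,b)=(b,a)$ for \emph{every} $\A\in\baseV$. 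I shall use the two derived unary terms $p(x)\coloneqq v(x,s(x))$ and $q(x)\coloneqq v(s(x),x)$, which retract onto the diagonal copies, $p^{\fnt{P}_{\Gamma}(\A)}(a,b)=(a,a)$ and $q^{\fnt{P}_{\Gamma}(\A)}(a,b)=(b,b)$.

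Faithfulness is immediate, since $\fnt{P}_{\Gamma}(h)=h\times h$ and restricting to the diagonal recovers $h$. For fullness, take a morphism $g\colon\fnt{P}_{\Gamma}(\A)\to\fnt{P}_{\Gamma}(\A')$. As $g$ commutes with the term operation $p$ and $p^{\fnt{P}_{\Gamma}(\A)}(a,a)=(a,a)$, the element $g(a,a)=p^{\fnt{P}_{\Gamma}(\A')}(g(a,a))$ lies on the diagonal; write $g(a,a)=(h(a),h(a))$ to define $h\colon A\to A'$. Since $(a,b)=v^{\fnt{P}_{\Gamma}(\A)}((a,a),(b,b))$ and $g$ commutes with $v$, it follows that $g(a,b)=(h(a),h(b))$, so $g=h\times h=\fnt{P}_{\Gamma}(h)$. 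That $h$ is a $\Sigma$-homomorphism then falls out of (L): applying $g$ to the $\Gamma$-image of a diagonal tuple and reading off first coordinates through $\pi_1\circ t^{\fnt{P}_{\Gamma}}\circ(\delta)^{n}=f$ yields $h(f^{\A}(\vec a))=f^{\A'}(h(\vec a))$.

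The substance of the argument is essential surjectivity, which I approach by showing that the isomorphism-closed class $\{\fnt{P}_{\Gamma}(\A)\mid\A\in\baseV\}$ is itself closed under products, subalgebras and homomorphic images; as it contains the generators $\fnt{P}_{\Gamma}(\N)$ ($\N\in\CN$), it must then contain all of $\duplicateV=\HSP(\fnt{P}_{\Gamma}(\CN))$. Products are routine: coordinatewise operations give $\prod_i\fnt{P}_{\Gamma}(\A_i)\cong\fnt{P}_{\Gamma}(\prod_i\A_i)$, with $\prod_i\A_i\in\baseV$. For subalgebras, let $S\leq A\times A$ be a $\Gamma$-subuniverse of $\fnt{P}_{\Gamma}(\A)$. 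Applying $p$ and $q$ shows that $(a,b)\in S$ forces $(a,a),(b,b)\in S$, while $v$ gives the converse, $v((a,a),(b,b))=(a,b)\in S$; hence $S=A'\times A'$ with $A'=\{a\mid(a,a)\in S\}$, and (L) shows $A'$ is a $\Sigma$-subuniverse, so the subalgebra equals $\fnt{P}_{\Gamma}(\A')$ with $\A'\leq\A$ in $\baseV$.

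The delicate case, and the point I expect to be the \emph{main obstacle}, is homomorphic images. Given a congruence $\theta$ on $\fnt{P}_{\Gamma}(\A)$, the key lemma is that $\theta$ is recoverable from its trace on the diagonal: applying the retractions $p,q$ shows that $(a,b)\mathrel\theta(c,d)$ implies $(a,a)\mathrel\theta(c,c)$ and $(b,b)\mathrel\theta(d,d)$, and applying $v$ gives the converse. Setting $\psi=\{(a,c)\mid(a,a)\mathrel\theta(c,c)\}$, one checks via (L) that $\psi$ is a $\Sigma$-congruence on $\A$, and the equivalence just noted makes $([a]_\psi,[b]_\psi)\mapsto[(a,b)]_\theta$ a well-defined isomorphism $\fnt{P}_{\Gamma}(\A/\psi)\cong\fnt{P}_{\Gamma}(\A)/\theta$, with $\A/\psi\in\baseV$. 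This closes the three cases and hence establishes the equivalence. The recurring theme is that (M) and (P) supply exactly enough definable structure, the mix and swap and through them the two diagonal retractions, to force every product, subalgebra and quotient of a duplicated algebra to split as a square; the one genuinely careful step is checking that these derived operations behave as claimed on all of $\baseV$ rather than merely on $\CN$.
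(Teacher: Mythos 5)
Your argument is correct, but one point of orientation first: this paper does not itself prove Theorem~\ref{thm:GeneralProductEquiv}; it recalls the result from \cite[Theorem~3.2]{prod}, so there is no in-text proof to match against, and your proposal stands or falls on its own. It does stand. Your preliminary observation is exactly the crux on which the whole framework turns: every $\Gamma$-term operation on $\fnt{P}_{\Gamma}(\A)$ splits as a pair of $\Sigma$-term operations in the flattened variables, so conditions (L), (M), (P), though stated only for $\CN$, are equational and therefore persist to all of $\baseV=\mathbb{V}(\CN)$ by Birkhoff---without this, nothing downstream works. Your route differs from the source in how it handles surjectivity on objects: \cite{prod} works at the level of identities throughout $\duplicateV$ as well (facts such as $v(p(x),q(x))\approx x$ and $s(p(x))\approx p(x)$ are $\Gamma$-identities valid in the generators $\fnt{P}_{\Gamma}(\CN)$, hence in every $\B\in\duplicateV$) and uses them to act directly on an arbitrary member of the duplicate variety, whereas you instead show the isomorphism-closed image of $\fnt{P}_{\Gamma}$ is closed under $\HSP$ and catch $\duplicateV$ that way; your version is more pedestrian but each of the three closure computations is independently reusable (your congruence analysis, for instance, immediately gives the lattice isomorphism $\mathrm{Con}(\fnt{P}_{\Gamma}(\A))\cong\mathrm{Con}(\A)$, which the abstract equivalence also yields but less explicitly). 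Two small repairs: first, for $\fnt{P}_{\Gamma}$ to be a functor with the stated codomain you also need the forward containment $\fnt{P}_{\Gamma}(\A)\in\duplicateV$ for every $\A\in\baseV$; this follows from the same three facts read in the other direction ($\fnt{P}_{\Gamma}$ sends embeddings to embeddings, surjections to surjections, and commutes with products, applied to a presentation of $\A$ in $\HSP(\CN)$), but you never say it, and essential surjectivity alone does not supply it. Second, in the subalgebra and congruence steps the bare appeal to (L) is slightly elliptical: the (L)-term $t$ applied to diagonal tuples returns $f^{\A}(\vec a)$ only in its \emph{first} coordinate, with an uncontrolled second coordinate, so you must compose with your retraction $p$ (or invoke both the $i=1$ and $i=2$ instances of (L)) to land back on the diagonal before concluding that $A'$ is a subuniverse or that $\psi$ is compatible with $f$. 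Your toolbox contains everything needed for both repairs, so these are presentational gaps rather than mathematical ones.
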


The classes of algebras arising in this section have prinicipally been varieties.  
In the next section we concentrate on singly-generated prevarieties.  
The following corollary 
 tells us how the class operators $\HSP$ and $\ISP$ 
behave with respect to  duplication.  
It is an almost immediate consequence of the fact that $\fnt{P}_{\Gamma}$ is a categorical equivalence; 
assertion (c) follows directly  from~(a) and~(b).

 \begin{coro}\label{Cor:ISPDuplicated}
Assume that $\Gamma$ duplicates a class of algebras $\CM$. 
The following statements hold
for each ${\A\in\mathbb{V}(\CM)}$: 
\begin{newlist}
\item[{\upshape (a)}]
$\HSP(\fnt{P}_{\Gamma}(\A))$ is categorically equivalent to $\HSP(\A)$.
\item[{\upshape (b)}]
$\ISP(\fnt{P}_{\Gamma}(\A))$ is categorically equivalent to $\ISP(\A)$.
\item[{\upshape (c)}]
If $\mathbb{V}(\A)=\ISP(\A)$ then $\mathbb{V}(\fnt{P}_{\Gamma}(\A))=\ISP(\fnt{P}_{\Gamma}(\A))$.
\end{newlist}
\end{coro}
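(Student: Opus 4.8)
The plan is to deduce all three parts from Theorem~\ref{thm:GeneralProductEquiv}, namely that $\fnt{P}_{\Gamma}$ is a categorical equivalence, combined with the explicit description of the functor: on objects $\A\mapsto A\times A$ and on morphisms $h\mapsto h\times h$. The first step I would record is that this concrete ``squaring'' form makes $\fnt{P}_{\Gamma}$ both preserve and reflect each of the three constructions underlying $\HSP$ and $\ISP$. Indeed $h\times h$ is injective (respectively, surjective) if and only if $h$ is, so $\fnt{P}_{\Gamma}$ carries embeddings to embeddings and surjections to surjections, and conversely $\fnt{P}_{\Gamma}(g)$ being an embedding or a surjection forces the same of $g$; moreover there is a natural isomorphism $\fnt{P}_{\Gamma}(\A)^{I}\cong\fnt{P}_{\Gamma}(\A^{I})$, since products are formed coordinatewise on both sides. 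This is the one place where purely categorical reasoning does not suffice: a categorical equivalence preserves and reflects monomorphisms and epimorphisms, whereas $\HSP$ and $\ISP$ are built from the sharper notions of embedding and surjection, and it is exactly the explicit form of $\fnt{P}_{\Gamma}$ that bridges this gap.

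For~(a), the preservation half at once gives $\fnt{P}_{\Gamma}\big(\HSP(\A)\big)\subseteq\HSP\big(\fnt{P}_{\Gamma}(\A)\big)$, and since $\fnt{P}_{\Gamma}$ is full and faithful it restricts to a full, faithful functor on $\HSP(\A)$. It then remains only to establish essential surjectivity onto $\HSP\big(\fnt{P}_{\Gamma}(\A)\big)$, which is the crux of the argument. Given $\X\in\HSP\big(\fnt{P}_{\Gamma}(\A)\big)$, choose a witnessing subalgebra $\Y\leq\fnt{P}_{\Gamma}(\A)^{I}$ together with a surjection $\Y\twoheadrightarrow\X$. Rewriting $\fnt{P}_{\Gamma}(\A)^{I}\cong\fnt{P}_{\Gamma}(\A^{I})$ and using that the duplicate variety is closed under subalgebras, we have $\Y\cong\fnt{P}_{\Gamma}(\B')$ for some $\B'$; by fullness the embedding $\Y\hookrightarrow\fnt{P}_{\Gamma}(\A^{I})$ and the surjection $\Y\twoheadrightarrow\X\cong\fnt{P}_{\Gamma}(\B)$ are of the form $\fnt{P}_{\Gamma}(g)$ and $\fnt{P}_{\Gamma}(q)$. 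The reflection property now forces $g$ to be an embedding and $q$ a surjection, so $\B'\in\ISP(\A)$ and $\B\in\HSP(\A)$ with $\X\cong\fnt{P}_{\Gamma}(\B)$, as required. Part~(b) follows by the identical argument with the surjection deleted throughout, giving an equivalence between $\ISP(\A)$ and $\ISP\big(\fnt{P}_{\Gamma}(\A)\big)$.

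For~(c) I would argue that $\mathbb{V}\big(\fnt{P}_{\Gamma}(\A)\big)\subseteq\ISP\big(\fnt{P}_{\Gamma}(\A)\big)$, the reverse inclusion being automatic. Since $\mathbb{V}\big(\fnt{P}_{\Gamma}(\A)\big)=\HSP\big(\fnt{P}_{\Gamma}(\A)\big)$, take $\X$ in this class; by the essential surjectivity established for~(a) we have $\X\cong\fnt{P}_{\Gamma}(\B)$ with $\B\in\HSP(\A)=\mathbb{V}(\A)=\ISP(\A)$. The preservation half of~(b) then places $\fnt{P}_{\Gamma}(\B)\in\ISP\big(\fnt{P}_{\Gamma}(\A)\big)$, and as $\ISP$ is closed under isomorphism we conclude $\X\in\ISP\big(\fnt{P}_{\Gamma}(\A)\big)$. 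The single genuine obstacle in the whole corollary is therefore the reflection step in~(a) and~(b): one must check that the subalgebra and the quotient witnessing membership in the duplicate variety genuinely descend, through $\fnt{P}_{\Gamma}$, to a subalgebra and a quotient in the base class, and it is the interplay of fullness, closure of the duplicate variety under the operators $\mathbb{S}$ and $\mathbb{P}$, and the concrete squaring form that secures this.
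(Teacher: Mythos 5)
Your proof is correct and takes essentially the same route as the paper, which disposes of the corollary in one line as an almost immediate consequence of the categorical equivalence of Theorem~\ref{thm:GeneralProductEquiv} with assertion~(c) following from~(a) and~(b); your write-up merely supplies the details, using the concrete form $h\mapsto h\times h$ to see that $\fnt{P}_{\Gamma}$ preserves and reflects embeddings, surjections and products. One small correction: your claim that purely categorical reasoning cannot suffice is not quite right, since in a variety the monomorphisms are exactly the injective homomorphisms and the regular epimorphisms exactly the surjections, both of which any equivalence preserves and reflects --- though your explicit squaring argument secures the same facts in a more elementary way.
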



\section{Natural duality and product representation}\label{Sec:NatDual}
It is appropriate to recall only in brief 
the theory of natural dualities as we  shall employ it.  
A textbook treatment is given in~\cite{CD98} and a summary 
geared to applications to distributive bilattices in \cite[Sections~3 and~5]{CPOne}.   

Our object of study in this section will be a prevariety $\CA$ generated by an
 algebra $\M$, so that $\CA = \ISP(\M)$.  
(Only in Section~\ref{Sec:prebilat} will we replace the single algebra~$\M$
by a family of algebras~$\CM$.  We shall then need to bring multisorted duality theory into play.)

Traditionally (and in \cite{CD98} in particular)~$\M$ is assumed to be finite.
This suffices for our applications in Section~\ref{NatViaDupl}.
However our application to bilattices with generalised conflation will 
depend on the more general theory presented  in~\cite{pig}.  
Therefore we shall assume that $\M$  can be equipped with a compact Hausdorff 
topology $\Tp$ with respect to which it becomes a topological algebra.
When~$\M$ is finite~$\Tp$ is necessarily discrete.

Our aim is to find  a second category~$\CX $
whose objects are topological structures of common type  and 
which is dually equivalent to~$\CA$ via functors $\D \colon \CA \to \CX $ and 
$\E \colon \CX  \to \CA$.  
Moreover---and this is a key feature  of a natural duality---we want each algebra  
$\A$ in $\CA$ to be  concretely representable 
as an algebra  of continuous structure-preserving maps from $\D(\A)$ 
(the \emph{dual space} of~$\A$) into~$\MT$, where $\MT \in \CX$
 has the same underlying set $M$ as does~$\M$. 
For this to succeed, some compatibility between the structures~$\M$ and $\MT$ will be necessary. 
We consider a topological structure $\MT = (M; G,R,\Tp)$ where 
\begin{itemize}
\item $\Tp$ is a topology on~$M$ (as demanded above);
\item $G$ is a set of operations on $M$, meaning that, for $g \in G$ of 
arity
$n \geq 1$, the map $g \colon \M^n \to \M$ is a 
continuous homomorphism
(any  nullary 
operation in $G$ will be identified with a constant in the type of~$\M$);
\item $R$ is a set of 
 relations on~$M$ such that 
 if $r\in R$ is $n$-ary   ($n \geq 1$) then $r$ is the universe of a 
topologically closed  
subalgebra $\mathbf r$ of~$\M^n$.
\end{itemize}
We refer to such a topological structure $\MT$ as an \emph{alter ego} for~$\M$ 
 and say that $\MT$ and $\M$ are \emph{compatible}.   
Of course. the topological conditions imposed on~$G$ and $R$ are trivially satisfied if~$M$ is finite.
(The general theory  in~\cite{CD98} allows 
an alter ego also to include partial operations,
but they do not arise in our intended applications.) 
We use $\MT$ to build a new category~$\CX $.  
We first consider structures of the same type as~$\MT$.
These have the form 
$\X = (X; G^\X ,R^\X, \Tp^\X)$ 
 where $\Tp^\X$ is a compact Hausdorff  topology 
and $G^\X$ and $R^\X$ are sets of  operations and relations on $X$
 in bijective correspondence   with those in $G$ and~$R$, with matching arities. 
Isomorphisms between such structures are defined in the obvious way.
For any non-empty set~$S$ we give $M^S$ the product topology 
and lift the elements of $G$ and~$R$ pointwise to~$M^S$.  
The \emph{topological prevariety} generated by~$\MT$ is 
$\CX  := \IScP(\MT)  $,  
the class of isomorphic copies of closed substructures of non-empty powers of $\MT$, 
with $^+$ indicating that the empty structure is included.
We make~$\CX $ into  a category by taking all continuous 
structure-preserving maps as the morphisms. 

As a consequence of the compatibility of $\MT$ and~$\M$, 
and the topological conditions imposed, 
the following assertions are true.
Let~$\A \in \CA$ and $\X \in\CX$.  Then  $\CA(\A,\M)$  may be 
seen as a closed
 substrucructure of $\MT^{A} $ and 
 $\CX (\X,\MT)$  as a subalgebra of $\M^{X}$.
We can  set up well-defined contravariant  hom-functors $\D \colon \CA \to \CX $
and $\E \colon \CX _\Tp \to \CA$; 
\begin{alignat*}{3}
&\text{on objects:} & \hspace*{2.5cm}  &\D \colon  \A \mapsto  \CA(\A,\M), 
  \hspace*{2.5cm}  \phantom{\text{on objects:}}&&\\
&\text{on morphisms:}  & &  \D \colon  x \mapsto - \circ x,&& \\
\shortintertext{and}
&\text{on objects:} & & \E  \colon  \X \mapsto  \CX (\X,\MT),
\phantom{\text{on objects:}}&&\\
&\text{on morphisms:}  & &\E  \colon  \phi \mapsto - \circ \phi,
\phantom{\text{on morphisms:}}&&
\end{alignat*}

The following assertions are part of the standard framework of natural duality theory.
Details can be found in \cite[Chapter~2]{CD98}; see also \cite[Section~2]{pig}.
Given $\A\in \CA$ and $\X \in \CX  $, 
we have  natural  evaluation maps 
$ e_{\A} \colon a \mapsto -\circ a$ and $\epsilon_{\X}\colon x \mapsto -\circ x$, 
with $e_\A\colon \A \to \E\D(\A)$ and  $\epsilon_\X \colon \X \to \D\E(\X)$.
Moreover  $(\D,\E,e,\epsilon)$ is a dual adjunction. 
Each of the maps $e_\A$ and $\epsilon_{\X}$ is an embedding.  
We say that~$\MT$ \emph{yields a duality on}~$\CA$, or simply that 
$\MT$ \emph{dualises}~$\M$, if each $e_\A$ is surjective, so that it is an 
isomorphism $e_\A\colon \A \cong \E\D(\A)$.   
A dualising alter ego~$\MT$ plays a special role in the duality it sets up:
it is the dual space of the free algebra on one generator in~$\CA$.   This fact is a consequence of compatibility.
 More generally, the free algebra generated by a non-empty  set~$S$ has dual space~$\MT^S$.

Assume that $\MT$ yields a duality on~$\CA$
and in addition that each $\epsilon_\X$ is  surjective and so an isomorphism.  
Then we say $\MT$ \emph{fully dualises} $\M$ or that the duality yielded by~$\MT$ is \emph{full}.  
In this case $\CA$ and~$\CX$ are dually equivalent.  
Full dualities are particularly amenable if they are \emph{strong};  
this is the requirement   that the 
alter ego be injective in the topological prevariety it generates.
 We do not need here to go deeply into the topic of strong dualities 
(see~\cite[Chapter~3]{CD98} for a full discussion)  
but we do note in passing that each of the functors~$\D$ and~$\E$ in a  strong  duality
 interchanges embeddings and surjections---a major virtue if a duality is 
 to be used to transfer algebraic problems into a dual setting.

We are ready to present our duality theorem for duplicated (pre)varieties.
Our notation is chosen to match that in Theorem~\ref{thm:GeneralProductEquiv}.

\begin{thm}[Duality Transfer Theorem]\label{thm:TransferNaturalDualities}
Let $\N$ be an algebra  and assume that $\Gamma$  duplicates~$\N$.
If the topological structure $\twiddle{\N}= (N;G,R,\Tp)$ yields a  duality on 
$\baseV= \ISP(\N)$ with dual category 
$\CY = \IScP(\NT)$, then $\twiddle{\N}^{2}$ yields a  
duality on $\duplicateV=\ISP(\fnt{P}_{\Gamma}(\N))$, 
again with $\CY$ as the dual category.
If the former duality is full, respectively strong, then the same is true of the latter. 
\end{thm}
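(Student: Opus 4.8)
The plan is to reduce everything to the categorical equivalence $\fnt{P}_{\Gamma}\colon \baseV\to\duplicateV$ supplied by Theorem~\ref{thm:GeneralProductEquiv} (and its restriction to prevarieties in Corollary~\ref{Cor:ISPDuplicated}(b)), and to show that on the dual side this equivalence is witnessed by the \emph{identity} on $\CY$. The first, purely structural, observation is that $\NT^{2}$ generates the same topological prevariety as $\NT$: since $\NT^{2}$ is a power of $\NT$ we have $\IScP(\NT^{2})\subseteq\IScP(\NT)$, while the diagonal $\delta\colon \NT\to\NT^{2}$, $x\mapsto(x,x)$, is readily checked to be an embedding of structures onto the (closed, as $\Tp$ is Hausdorff) diagonal substructure, giving $\NT\in\IScP(\NT^{2})$. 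Hence $\IScP(\NT^{2})=\IScP(\NT)=\CY$, which already accounts for the dual category being unchanged.

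Next I would verify that $\NT^{2}$ is a legitimate alter ego for $\fnt{P}_{\Gamma}(\N)$, that is, that it is compatible with it. The point is that the coordinatewise operations and relations of $\NT^{2}$ are precisely the images under $\fnt{P}_{\Gamma}$ of the witnesses for $\NT$ being an alter ego of $\N$. Concretely, for $g\in G$ of arity $n$ the operation $g^{\NT^{2}}$ coincides, after the canonical identification $\fnt{P}_{\Gamma}(\N)^{n}\cong\fnt{P}_{\Gamma}(\N^{n})$ (equivalences preserve products), with $\fnt{P}_{\Gamma}(g)$, and so is a continuous $\Gamma$-homomorphism; and for $r\in R$ with associated closed subalgebra $\mathbf{r}\leq\N^{n}$, the lifted relation $r^{\NT^{2}}$ is the universe of $\fnt{P}_{\Gamma}(\mathbf{r})$, a closed subalgebra of $\fnt{P}_{\Gamma}(\N)^{n}$. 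Continuity and closedness transfer because product topologies and products of closed sets behave as expected; note that nothing here uses finiteness of $\N$.

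The substantial step is to identify the hom-functors. Write $\D'$, $\E'$ for those of the base duality (built from $\NT$) and $\D$, $\E$ for those built from $\NT^{2}$. I would establish a natural isomorphism $\D\circ\fnt{P}_{\Gamma}\cong\D'$, implemented on objects by $\bar\varphi\mapsto\bar\varphi\times\bar\varphi$. By full faithfulness of $\fnt{P}_{\Gamma}$ this is a bijection from $\baseV(\A,\N)$ onto $\duplicateV(\fnt{P}_{\Gamma}(\A),\fnt{P}_{\Gamma}(\N))$, and a routine naturality check handles morphisms; the crux is that it is an isomorphism in $\CY$. Preservation of the operations $g$ is direct from coordinatewise evaluation. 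The genuinely delicate verification is for the relations: a tuple $(\varphi_{1}\times\varphi_{1},\dots,\varphi_{n}\times\varphi_{n})$ lies in $r$ lifted to $(\NT^{2})^{A\times A}$ exactly when $(\varphi_{1}(a),\dots,\varphi_{n}(a))\in r$ and $(\varphi_{1}(b),\dots,\varphi_{n}(b))\in r$ for all $(a,b)\in A\times A$, and this double quantifier collapses to the single condition defining membership of $r$ lifted to $\NT^{A}$. This collapse, which is the heart of the argument and where I expect the only real friction, is exactly what makes the two relational structures match. Dually, using that $\NT^{2}$ is the product $\NT\times\NT$ in $\CY$, I would obtain $\E\cong\fnt{P}_{\Gamma}\circ\E'$.

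Finally I would assemble the conclusions. Since every object of $\duplicateV=\ISP(\fnt{P}_{\Gamma}(\N))$ is isomorphic to some $\fnt{P}_{\Gamma}(\A)$, transporting the base isomorphism $e'_{\A}\colon\A\cong\E'\D'(\A)$ along the natural isomorphisms just established, and applying $\fnt{P}_{\Gamma}$, shows the evaluation map $e_{\fnt{P}_{\Gamma}(\A)}$ is an isomorphism; naturality of $e$ then yields this for all objects, so $\NT^{2}$ dualises $\fnt{P}_{\Gamma}(\N)$. Fullness transfers by the same transport applied to $\epsilon$, using $\D\E\cong\D\fnt{P}_{\Gamma}\E'\cong\D'\E'\cong\mathrm{Id}_{\CY}$. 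Strongness is the cleanest of all: $\NT^{2}=\NT\times\NT$ is a product of copies of $\NT$, each injective in $\CY$ by hypothesis, and a product of injectives is injective, so $\NT^{2}$ is injective in $\CY=\IScP(\NT^{2})$, which together with fullness delivers a strong duality.
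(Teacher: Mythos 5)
Your proposal is correct, and at its core it runs on the same ingredients as the paper's proof---compatibility of $\NT^{2}$ with $\fnt{P}_{\Gamma}(\N)$ via the operations $g\times g$ and relations $r\times r$, the identification $y\mapsto y\times y$ of dual spaces together with the quantifier collapse for lifted relations, and injectivity of $\NT^{2}$ inherited from $\NT$ for strongness---but you package them differently. The paper proves the duality claim by a bare-hands surjectivity argument: given $\alpha\in\E_{\duplicateV}\D_{\duplicateV}(\fnt{P}_{\Gamma}(\B))$ it splits $\alpha$ into two maps $\alpha_{1},\alpha_{2}$ using the projections, invokes the base duality to produce $b_{1},b_{2}\in B$, and verifies $\alpha(x)=x(b_{1},b_{2})$; only afterwards, in the fullness part, does it prove that $\eta\colon y\mapsto y\times y$ is an isomorphism of topological structures. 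You instead promote $\eta$ to a natural isomorphism $\D_{\duplicateV}\circ\fnt{P}_{\Gamma}\cong\D_{\baseV}$, pair it with $\E_{\duplicateV}\cong\fnt{P}_{\Gamma}\circ\E_{\baseV}$ (exploiting that $\NT^{2}$ is the product $\NT\times\NT$ in $\CY$), and obtain duality, fullness and strongness by transport; this is closer in spirit to the diagram-chase proof the paper gives for the multisorted analogue, Theorem~\ref{thm:transfer-D}, than to its proof of Theorem~\ref{thm:TransferNaturalDualities}, and it buys a more uniform argument in which one lemma does double duty for duality and fullness. You also make explicit a point the paper merely asserts, namely $\IScP(\NT^{2})=\IScP(\NT)$ via the diagonal embedding, and you rightly observe that nothing requires $N$ finite (the paper restricts to the finite case and waves at the general one), which matters for the application to $\cat{DO}$. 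Two routine points in your sketch should be said aloud: first, ``isomorphism in $\CY$'' includes a homeomorphism claim, which the paper settles by a compactness argument (a continuous bijection between these compact Hausdorff structures suffices); second, transporting the base evaluation only proves that $e_{\fnt{P}_{\Gamma}(\B)}$ itself is an isomorphism once you check that, modulo your identifications, $e_{\fnt{P}_{\Gamma}(\B)}$ coincides with $\fnt{P}_{\Gamma}(e_{\B})$---an abstract natural isomorphism $\E\D\cong\mathrm{Id}$ does not by itself force the unit of the given dual adjunction to be invertible. Both verifications are immediate computations (the second is exactly the paper's calculation $(y\times y)(b_{1},b_{2})=(y(b_{1}),y(b_{2}))$), so neither is a gap in substance.
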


\begin{proof}  
For the purposes of the proof we shall assume that~$N$, 
and hence also~$M$, is finite.  
It is routine to check that the topological conditions 
which come into play when~$N$ is infinite lift to the duplicated set-up.    

We claim  that $\NT^2$ acts as a legitimate  alter ego for $\M:= \fnt{P}_\Gamma(\N)$.
Certainly these structures  have the same universe, namely $N \times N$.
 It follows from the definition of the operations of  $\fnt{P}_\Gamma(\N)$  that
  $\fnt{P}_\Gamma(\mathbf r)$, whose universe is $r \times r$, 
  is a subalgebra of $(\fnt{P}_\Gamma (\N))^n$
whenever $r \in R$ is the universe of a subalgebra $\mathbf  r$ of $\N^n$.   
 But $R^{\NT^2}$ consists of the relations  $r \times  r$, for ${r \in R}$.
Likewise, an $n$-ary operation $g$ in $G$ gives rise to the same operation, 
{\it viz.}~$g \times g$,  of $\fnt{P}_\Gamma( \N)$ and in the structure~$\NT^2$. 
Hence  $g\times g$ is compatible with  $ \fnt{P}_{\Gamma}(\N)$.  

We now set up the functors for the existing  duality for $\ISP(\N)$ 
and for  the  duality sought for  $\ISP(\M)$.
Let  $\CX=\IScP(\twiddle{\N}^2)$.  
Then  $\CY = \IScP(\twiddle{\N}^2)=\CX$  too.  
Let  $\D_{\baseV}\colon\baseV\to\CY$ and $\E_{\CB}\colon\CY\to \baseV$
 be the functors determined by $\twiddle{\N}$ and 
 $\D_{\duplicateV}\colon\duplicateV\to\CX$ and $\E_{\duplicateV}\colon\CX\to \duplicateV$ 
 those  determined by $\twiddle{\N}^2$.  
 Since $\CY=\CX$, the functors  $\D_{\baseV}$ and $\D_{\duplicateV}$
 have a common codomain.

Let $\A\in\duplicateV$. 
By Corollary~\ref{Cor:ISPDuplicated}, 
we may assume that $\A=\fnt{P}_\Gamma(\B)$, 
for some $\B\in\baseV$.
By Theorem~\ref{thm:GeneralProductEquiv} and the definition 
of~$\fnt{P}_\Gamma$
on morphisms, 
\[
\duplicateV(\A,\fnt{P}_\Gamma(\N))=\fnt{P}_{\Gamma}(\baseV(\B,\N))=\{\,y\times y\mid y\in\baseV(\B,\N)\,\}.
\]
Let
$\alpha\in\E_{\duplicateV} \D_{\duplicateV}(\A) = \CX(\D_{\duplicateV}(\A),\twiddle{\N}^2)$.
 For $i=1,2$, define $\alpha_i\colon\D_{\baseV}(\B) \to \twiddle{\N}$
by $\alpha_i(y)=\pi_i^{N}(\alpha(y\times y))$ for  
$y\in\baseV(\B,\N)$.
It is straightforward to see that 
$\alpha_i\in\E_{\baseV}\D_{\baseV}(\B)$.
Therefore, for $i=1,2$,   there exists $b_i\in    B$  
such that $\alpha_i(y)=y(b_i)$ for  $y\in\D_{\baseV}(\B)$. 
We claim that $\alpha(x)=x(b_1,b_2)$ for  all
$x\in\duplicateV(\A,\fnt{P}_{\Gamma}(\N ))$.
We can write   $x=y\times y$ where $y\in\CB(\B,\N)$. 
Then
\begin{multline*}
\alpha(x)=\alpha(y\times y)=(\pi_1^N(\alpha(y\times y)),\pi_2^N(\alpha(y\times y)))\\
=(\alpha_1(y),\alpha_2(y))=
(y(b_1),y(b_2))=(y\times y)(b_1,b_2)
=x(b_1,b_2). 
\end{multline*}
This proves that
 $e_{\A} \colon \A \to  \E_{\duplicateV}\D_{\duplicateV}(\A)$ is surjective
for each $\A \in \duplicateV$, so that we do indeed have a duality for $\CA$ based
on the alter ego~$\MT= \NT^2$.

We now claim that if $\NT$ fully dualises~$\N$ 
then~$\MT$ fully dualises~$\M$.
To do this we shall show that the bijection 
$\eta \colon \D_{\baseV}(\B) \to \D_{\duplicateV}(\A)$,   
defined by $\eta(y)= y\times y$ for each $y\in \D_{\baseV}(\B)$, 
is an isomorphism (of topological structures)  from 
$\D_{\baseV}(\B)$ onto $\D_{\duplicateV}(\A)$,
where, as before, $\A=\fnt{P}_\Gamma(\B)$,
see 
\cite[Lemma 3.1.1]{CD98}. 
Let $r$ 
 be an $n$-ary relation in $\NT$. 
For 
$y_1,\ldots,y_n\in \D_{\baseV}(\B)$,
\begin{align*}
(y_1,\ldots,&y_n)\in r^{\D_{\baseV}(\B)}\\
&\Longleftrightarrow \forall a \in N \left((y_1(a),\ldots,y_n(a))\in r\right)\\
& \Longleftrightarrow \forall (a_1,a_2) \in M\left( ((y_1(a_1),y_1(a_2)),\ldots,(y_n(a_1),y_n(a_2)))\in r\times r\right)\\ 
&\Longleftrightarrow  (y_1\times y_1, \ldots,y_n\times y_n)\in (r \times r)^{\D_{\duplicateV}(\A)}.
\end{align*}
A similar argument applies to operations.

The map~$\eta$ has compact codomain and Hausdorff 
domain and hence is a homeomorphism provided $\eta^{-1}$
is continuous.  
To prove this it will suffice
to show that each map $\pi_b \circ \eta^{-1}$ is continuous, where 
$\pi_b$ denotes the projection from $\D_{\baseV}(\B)$, 
regarded as a subspace  of~$\NT^B$,
onto the $b$-coordinate, for $b\in B$. 
The map $\pi_{(b,b)}$ is defined likewise. 
Let~$U$ be open in~$N$.  
For  $y\times y\in\D_{\duplicateV}(\A)$,
\begin{multline*}
y\times y \in 
(\pi_b \circ \eta^{-1}  )^{-1}(U)  \Longleftrightarrow
            \pi_b(y) \in U 
\Longleftrightarrow  y(b) \in U \\
 \Longleftrightarrow (y\times y)(b,b) \in U \times U
\Longleftrightarrow \pi_{(b,b)}(y\times y) \in U\times U \\
 \Longleftrightarrow (y \times y) \in (\pi_{(b,b)})^{-1} (U \times U).
\end{multline*}
This proves the continuity assertion.

Finally, since $\NT$  is injective in $\CY$ if and only if  
$\NT^2$ is,  $\NT$ yields a strong duality on
 $\baseV$  if and only if  $\NT^2$ yields a strong duality   on~$\duplicateV$,
by \cite[Theorem~3.2.4]{CD98}.
\end{proof}

The proof of Theorem~\ref{thm:TransferNaturalDualities} is essentially routine, 
given the Product Representation Theorem.
The theorem should not be disparaged because it is easy to derive. 
Rather the reverse: almost all the dualities given in 
Section~\ref{NatViaDupl} are new, and obtained at a stroke.

Of course, though,  Theorem~\ref{thm:TransferNaturalDualities} is only useful 
when we have a (strong) duality to hand for the base class $\ISP(\N)$  we wish to employ.  
Nothing we have said about natural dualities so far tells us how to find an alter ego 
$\NT$ for~$\N$, or even whether a duality exists.  
Fortunately, simple and well-understood strong dualities exist for 
the base varieties $\ISP(\N)$ which support the miscellany of 
logic-oriented examples presented in Section~\ref{NatViaDupl}.   
In all cases considered there, 
$\N$ is a small finite algebra with a lattice reduct.  
Existence of such a reduct
guarantees dualisability  \cite[Section~3.4]{CD98}: a
brute-force alter ego $\NT = (N; \mathbb S (\N^2), \Tp)$ is available.  
However this default choice is likely to yield a tractable duality only 
when  $\N$ is very small.
 Otherwise the subalgebra lattice $\mathbb S(\N^2)$ is generally unwieldy.   
 Methodology exists for slimming down a given dualising  alter ego 
 to yield a potentially more workable duality (see \cite[Chapter~8]{CD98}),  
but it is preferable to obtain an economical duality from the outset. 
 This is often possible when $\N$ is a distributive lattice,
not necessarily finite:  in many such cases one can  apply the piggyback 
method which originated with  Davey and Werner
 (see~\cite[Chapter~7]{CD98} and \cite{pig}).  
We shall demonstrate its use in Section~\ref{sec:Conflation},
where we develop a  duality for double Ockham algebras, our base variety 
for studying generalised conflation.  
 
Against this background we can appreciate 
the merits  of  Theorem~\ref{thm:TransferNaturalDualities}.  
Suppose we have 
a class $\ISP(\M)$ (with $\M$ finite) which is expressible 
as a duplicate of a dualisable base variety $\ISP(\N)$.
Then $|M| = |N|^2$ and, on cardinality grounds alone,  
finding an  amenable duality directly for $\ISP(\M) $ 
could be challenging, 
whereas the chances are much higher that we have available, 
or are able to set up, a simple dualising alter ego $\NT$ for~$\N$.  
And then, given~$\NT$ we can immediately 
obtain an alter ego~$\MT$ for~$\M$, with 
the same number of relations and operations in~$\MT$ as in~$\NT$.

\section{Examples of natural dualities via duplication}\label{NatViaDupl}

We now present a miscellany of examples. 
All involve bilattices but, as noted earlier, the scope of
our methods is potentially wider.
We derive (strong) dualities for  certain (finitely generated) duplicated
varieties  given  in~\cite{prod} by calling on  
well-known (strong) dualities for their base varieties.
A catalogue of base varieties and duplicates is assembled in 
\cite[Appendix, Table~1]{prod}, with references to where
in the paper these examples are presented. 
Table~\ref{table:exs}  lists  alter egos for dualities for base varieties. 
 These dualities are discussed in \cite{CD98},  with 
 their sources attributed. 
Natural dualities for the indicated 
 duplicated varieties, also strong, can be read off from the table, 
 using the Duality Transfer Theorem.  
When specifying a generator for  each base variety,  we adopt
abbreviations for standard sets of operations:
\[
\mathcal F_ {\CL}  =\{ \lor, \land, 0,1\}, \qquad 
 \mathcal F_{\CB}  = \mathcal F_{\DM} =\mathcal F_{\class K}   = 
\mathcal F_ {\CL} \cup \{ \sim\};
\]  
we have elected to denote negation in Boolean algebras, De Morgan 
algebras and Kleene algebras by~$\sim$, 
to distinguish it from bilattice negation, $\neg$.

The top row of Table~1 should be treated as a prototype,  both algebraically and dually.
There the base variety is $\CCD$, the variety of bounded distributive lattices. 
 The duplicated variety in this case is the 
variety $\DB$ of distributive bilattices.  
It is generated (as a prevariety) by the
four-element algebra in~$\DB$.  
Full details of the natural duality for $\DB$ and its 
relationship to Priestley duality for the base variety $\CCD$
appear in~\cite{CPOne}.   
All the other examples in the table work in essentially
the same way. 
 The examples we list may be grouped into  two types.   
In one type, the duplicator~$\Gamma$ 
includes the set of terms used to duplicate the variety of bounded lattices to create
bounded bilattices, augmented with additional terms to capture other operations 
from terms in the base language;  this applies to~$\DB$ itself, 
to implicative bilattices, to distributive bilattices with conflation,  
to the varieties carrying Moore's operator.  
In examples of the second type the base-level generator $\N$
is already equipped with a (distributive) bilattice structure and $\Gamma$ 
includes all the terms used to create $\DB$ plus terms to create any extra operation
present in~$\N$.    
This is the situation with negation-by-failure.
 
For the natural dualities recorded in Table~1, we note that, apart 
from~$\CCD$,  the base variety in each case is De Morgan algebras or a subvariety thereof.   
The alter ego includes a partial order $\preccurlyeq$ known as the
\emph{alternating order} 
in~{\cite[Theorem~4.3.16]{CD98}}; in the case of~$\DM$, the relation
$\preccurlyeq$ on universe $\{0,1\}^2 $ of the four-element generator 
$\fourvar{\DM}$ is the knowledge order.  
The map~$g$ is the involution swapping the coordinates.

\begin{table} 
\begin{tabular}{| c | l | l   || l | }
\hline
 \multicolumn{3}{|c||}{}& 
 \\[-2.5ex] 
 \multicolumn{3}{|c||}{base variety and  its natural  duality}&
 \multicolumn{1}{|c|}{duplicate variety} 
\\[.5ex] 
\hline
&&&\\[-2.5ex] 
variety & \multicolumn{1}{|c|}{ generator}  &  \multicolumn{1}{|c||}{alter ego} &   \multicolumn{1}{|c|}{non-bilattice} \\
&&&  \multicolumn{1}{|c|}{operation  added}
  \\[.5ex]
\hline
&&&\\[-2.5ex] 
bounded 
&
$(\{0,1\}; \mathcal F_{\CL})$ 
&
$(\{0,1\};\leq,\Tp)$ & N/A \\
DL's&& \cite[\S 4.3.1]{CD98}&  
\\[.5ex] 
\hline
&&&\\[-2.5ex] 
& &   &implication, $\aaimp$ \\  
&&&
\hfill {\ \ \cite{AA1}, 
\cite[\S 2]{BJR11}} 
\\[1ex]
Boolean&$( \{0,1\}; \mathcal F_{\CB}) $ 
& $(\{0,1\};\Tp)$ &Moore's epistemic 
\\
algebras&&\cite[\S4.1.2]{CD98}& \quad 
operator, $L$ 
\hfill\cite{GinMod}\\[1ex]
&&& negation-by-failure, $\slash$\\
&&& \hfill\cite[\S3]{RF}\\[1ex]\hline
&&&\\[-2.5ex] 
De Morgan 
& $
(\{0,1\}^2; \mathcal F_{\DM})$ &
$ 
(\{0,1\}^2;\preccurlyeq,g,\Tp)$&conflation,  $-$
\\
 algebras&&\cite[\S4.3.15]{CD98} &  
\quad (with bounds)\quad  
\cite{JR}   \\[1ex]
\hline 

&&&\\[-2.5ex] Kleene   & $(\{0,a,1\}; \mathcal F_{\KL})$ & see \cite[\S4.3.9]{CD98}  &negation-by-failure, $\slash$\\
 algebras&&&   \hfill
  \cite[\S4]{RF}\\[.5ex] 

\hline
\end{tabular}
\caption{ Examples of natural dualities (bounded case)} \label{table:exs} 
\end{table}

Only simple modifications are needed to handle the case when the
language of a lattice-based variety does not include  lattice bounds as nullary 
operations.  It is an old result that Priestley duality for 
the variety $\CCD_u$ can be set up in much the same way as that for $\CCD$,
 with the dual category being  pointed Priestley spaces, as described in 
\cite[Section~1.2 and Subsection~4.3.1]{CD98}.  
 Natural dualities for duplicates  
of $\CCD_u$ are  derived 
from those for corresponding duplicates  of~$\CCD$ simply by adding 
to the alter ego nullary operations   $(0,0)$ and $(1,1)$.  
Compare with \cite[Section~4]{CPOne}, which provides 
a direct treatment of duality for $\DB_u$; here, even more than in the bounded case, 
we see the merit of the automatic 
process that Theorem~\ref{thm:TransferNaturalDualities} 
supplies.  
 A duality for $\DM_u$ (De Morgan lattices)
 is obtained by adding the top and bottom elements  for the partial order $\preccurlyeq$ to the alter ego for~$\DM$.
 Our transfer theorem then 
applies to unbounded distributive bilattices 
with conflation.

\section{Bilattices with generalised conflation}\label{sec:Conflation}

In this section we break new ground, both in relation to product representation 
and in relation to natural duality.   

The bilattice-based variety
$\DB_{\boldsymbol -}$ that we study---%
(bounded) distributive bilattices with generalised conflation---%
has not been considered before.  
Previous  authors who have studied product representation  when  conflation
is present   
have assumed that this operation  is an involution that commutes with negation
  (see~\cite[Theorem~8.3]{FKlkids},~\cite{BJR11}  and our treatment in~\cite[Section~5]{prod}).  
  We shall demonstrate that neither  assumption is  necessary
for the existence of a product representation.

Our focus in this paper is on developing theoretical tools.  
Nevertheless we should supply application-oriented reasons to justify  
investigating generalised conflation.  
We first note that it is often, but not always, 
 natural to assume that conflation be an involution.  
On the other hand, the  justification for the commutation condition
is less clear cut.  
Indeed,  both the original definition in~\cite{FKlkids} and that in \cite{OW15} 
exclude commutation,   and this is brought in only later.
In \cite[Section~3]{OW15} 
the emphasis is on truth values.  
The authors'  desired interpretation 
then leads them  to consider a special algebra $\text{SIXTEEN}_{3}$,  
in which the conflation operation does commute with negation.   
In \cite[Section 2]{G99} conflation is used to study 
(knowledge) consistent and exact elements of a lattice. 
The investigations  in  both 
\cite{OW15} and \cite{G99} are intrinsically connected to
 the product representation for bilattices with conflation. 
Our product representation would permit 
similar interpretations 
when commutation fails and/or conflation is not an involution.
In a different setting, conflation has been  used in \cite{FitAnnProg} 
to present an algebraic model of the logic system of revisions in databases, 
knowledge bases, and belief sets introduced in \cite{MaTr}. 
In this model the coordinates of a pair in a product representation of a bilattice are interpreted as the 
degrees of confidence for including in a database an item of information and  for excluding it.
Conflation  then models the transformation of information that reinterprets 
as evidence for inclusion whatever did not  previously  
count as evidence against, and vice versa. 
That is, conflation comprises two processes: 
given the information against (for) a certain argument, 
these capture information for (against) the same argument. 
In \cite{FitAnnProg} these two transformations coincide, and are mutually inverse.  
Our work on generalised conflation would allow these assumptions to be weakened
so facilitating a wider range of models.

The class $\DB_{\boldsymbol -}$ consists of algebras  of the form  
\[ 
\A=(A;\lor_t,\land_t,\lor_k,\land_k,\neg,-,0,1),
\]
where the reduct of $\A$ obtained by suppressing~$-$  belongs to~$\DB$ and~$-$
is an endomorphism of $\A_t$ and a dual endomorphism of $\A_k$.
Here we elect to include bounds. 
The variety $\DBC$ of (bounded) distributive bilattices with  conflation 
(where  by convention conflation  and  negation do commute)
is a subvariety of $\DB_{\boldsymbol -}$. 
 However $\DB_{\boldsymbol -}$ and $\DBC$  behave 
quite differently:  even though $\neg$ 
is an involution, $-$ is not. 
As a consequence the monoid these operations 
generate is not finite,
as is the case in $\DBC$. 
(We note that the unbounded case of generalised conflation could also be 
treated by making appropriate modifications to the above definition and throughout what follows.)

Our product representation for $\DB_{\boldsymbol -}$ uses as its  base 
variety the class $\cat{DO}$ of double Ockham algebras.
  This is a new departure as regards representations of bilattice expansions.
A \emph{double Ockham algebra} is a  $\CCD$-based algebras equipped with  two 
dual endomorphisms of the $\CCD$-reducts.
 An  Ockham algebra  carries  just one such operation. 
The variety~$\cat{O}$ of Ockham algebras, which includes  Boolean algebras, 
De Morgan algebras and Kleene algebras among its  subvarieties, 
has been exhaustively studied, both algebraically and via duality methods, 
as indicated by the texts \cite{BV,CD98} and many articles.  
The variety~$\cat{DO}$ is much less well explored.   
The remainder of the section is accordingly organised as follows.   
Proposition~\ref{prop:Ock-conflation} presents the product representation
 for $\DB_{\boldsymbol -}$ over the base variety~$\cat{DO}$.  
We then set  $\DB_{\boldsymbol -}$ aside  while we develop the  
theory of~$\cat{DO}$ which we need if we are to  apply
our  Duality Transfer Theorem to $\DB_{\boldsymbol -}$.  
 This requires us first to identify an algebra~$\M$ such that
$\cat{DO}=\ISP(\M) $ (Proposition~\ref{prop:Dock}).  
We then set up an alter ego~$\MT$  for~$\M$ and call on 
\cite[Theorem~4.4]{pig} to obtain a natural duality for~$\cat{DO}$
(Theorem~\ref{thm:DockDuality}).  
This is   then combined with Theorem~\ref{thm:TransferNaturalDualities}  
to arrive at a natural duality for 
$\DB_{\boldsymbol -} $ (Theorem~\ref{thm:GenConflationDuality}).

To motivate how we can realise $\DB_{\boldsymbol -}$ as a duplicate of
$\cat{DO}$  we briefly recall from \cite[Section~5]{prod}  
how $\DBC$ arises as a duplicate of~$\DM$.
We  adopt the notation  introduced in  \cite[Section~4]{prod}.
Let $\Sigma$ be a language and $f$ be an 
$n$-ary function symbol in $\Sigma$.  
For $m\geq n$ and $i_1,\ldots, i_n\in\{1,\ldots, m\}$ we denote by  
$f^{m}_{i_1\cdots i_n}$  the $m$-ary term 
$
f^{m}_{i_1\ldots i_n}(x_1,\ldots,x_m)=f(x_{i_1},\ldots,x_{i_n})
$.
We can  
capture the extra operation $-$ on 
the generator
$\Svar{\DBC}$
of~$\DBC$
 using the De Morgan 
negation~$\sim$,   
 combined with  coordinate-flipping: 
  the family of terms 
 $\Gammavari {\DBC}=\Gammavari {\DB}\cup\{(\sim^{2}_2,\sim^{2}_1)\}$ 
acts as a duplicator for~$\DM$ with~$\DBC$ as the duplicated
variety; here
$\Gammavari{\DB}$  duplicates bounded lattices.
(See \cite[Section~5]{prod} for an explanation as to why the form of the operations
in~$\DBC$ dictates that~$\DM$ should be used as the base variety.)

We now present our duplication result linking $\cat{DO} $ and $\DB_{\boldsymbol -}$.

\begin{prop}  \label{prop:Ock-conflation}
The set 
$\Gammavari{\DB_{\boldsymbol -}}=\Gammavari {\DB}\cup\{(f^2_2,g^2_1)\}$ 
duplicates 
$\cat{DO}$. 
Moreover,
$
\DB_{\boldsymbol -}
=\mathbb{V}\bigl( \fnt{P}_{\Gamma_{\DB_{\boldsymbol -}}}(\cat{DO}) \bigr)$,
where 
 $\Sigma_{\Gammavari{\DB_{\boldsymbol -}}}$ 
is identified 
with the language of $\DB_{\boldsymbol -}$. 
\end{prop}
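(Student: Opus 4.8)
The plan is to prove the two assertions in turn, starting with the claim that $\Gamma_{\DB_{\boldsymbol -}}$ duplicates $\cat{DO}$, i.e.\ that conditions (L), (M) and (P) hold. Since $\Gamma_{\DB}\subseteq\Gamma_{\DB_{\boldsymbol -}}$ and $\Gamma_{\DB}$ already duplicates the bounded-lattice part (producing $\DB$), conditions (M) and (P) are inherited unchanged; in particular the coordinate swap demanded by (P) is supplied by bilattice negation $\neg$, acting as $(a,b)\mapsto(b,a)$, and (L) for the lattice symbols $\lor,\land,0,1$ is inherited likewise. The only genuinely new verification is (L) for the two dual endomorphisms $f,g$ of $\cat{DO}$. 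Writing $c$ for the operation symbol $[f^2_2,g^2_1]$, the defining formula for $\fnt{P}_{\Gamma_{\DB_{\boldsymbol -}}}$ gives $c^{\fnt{P}_{\Gamma_{\DB_{\boldsymbol -}}}(\N)}(a,b)=(f^{\N}(b),g^{\N}(a))$, so that $c$ applied to the diagonal sends $x\mapsto(f^{\N}(x),g^{\N}(x))$. Projecting onto the two coordinates recovers $f^{\N}$ in the first and $g^{\N}$ in the second, while composing further with the swap $\neg$ recovers $f^{\N}$ in the second coordinate and $g^{\N}$ in the first. Hence (L) holds for $f$ and $g$, and $\Gamma_{\DB_{\boldsymbol -}}$ duplicates $\cat{DO}$.

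For the identification $\DB_{\boldsymbol -}=\mathbb{V}(\fnt{P}_{\Gamma_{\DB_{\boldsymbol -}}}(\cat{DO}))$ I would first match the symbol $[f^2_2,g^2_1]$ with conflation $-$ and the symbols of $\Gamma_{\DB}$ with the bilattice operations, and then argue by double inclusion. The inclusion $\mathbb{V}(\fnt{P}_{\Gamma_{\DB_{\boldsymbol -}}}(\cat{DO}))\subseteq\DB_{\boldsymbol -}$ reduces, as $\DB_{\boldsymbol -}$ is a variety, to checking that each generator $\fnt{P}_{\Gamma_{\DB_{\boldsymbol -}}}(\N)$ lies in $\DB_{\boldsymbol -}$. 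Its $\DB$-reduct is the usual product bilattice over the lattice reduct of $\N$, and a direct calculation, using only that $f^{\N}$ and $g^{\N}$ interchange meet and join, shows that $-\colon(a,b)\mapsto(f^{\N}(b),g^{\N}(a))$ preserves $\lor_t,\land_t$ and interchanges $\lor_k,\land_k$; the behaviour on bounds follows from $f^{\N},g^{\N}$ interchanging $0$ and $1$. Thus $\fnt{P}_{\Gamma_{\DB_{\boldsymbol -}}}(\N)\in\DB_{\boldsymbol -}$.

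The reverse inclusion $\DB_{\boldsymbol -}\subseteq\mathbb{V}(\fnt{P}_{\Gamma_{\DB_{\boldsymbol -}}}(\cat{DO}))$ is the substantive step, and is where I expect the main obstacle to lie: it amounts to a product representation, producing for each $\A\in\DB_{\boldsymbol -}$ an algebra $\N\in\cat{DO}$ with $\A\cong\fnt{P}_{\Gamma_{\DB_{\boldsymbol -}}}(\N)$. I would start from the product representation of the $\DB$-reduct of $\A$ established in \cite{prod}, which yields a bounded distributive lattice $L$ and an isomorphism $\theta$ of that reduct onto a distributive bilattice with universe $L\times L$. Transporting $-$ along $\theta$ gives a generalised conflation $\bar-$ on $L\times L$, and the crux is to show $\bar-$ is forced to have the form $(x,y)\mapsto(f(y),g(x))$ for dual endomorphisms $f,g$ of $L$. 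Writing $\bar-(x,y)=(P(x,y),Q(x,y))$ and using that $\bar-$ is simultaneously an endomorphism for $\lor_t,\land_t$ and a dual endomorphism for $\lor_k,\land_k$, the first-coordinate identity coming from $\lor_t$ with the second arguments equal reads $P(x\lor x',y)=P(x,y)\lor P(x',y)$, whereas that coming from $\lor_k$ reads $P(x\lor x',y)=P(x,y)\land P(x',y)$; together these force $P(x,y)\lor P(x',y)=P(x,y)\land P(x',y)$, so $P$ does not depend on its first argument. A symmetric argument with $x$ held fixed shows $Q$ does not depend on its second argument; writing $P(x,y)=f(y)$ and $Q(x,y)=g(x)$, the surviving identities say precisely that $f$ and $g$ interchange meet and join, while evaluation on the truth bounds $(0,1)$ and $(1,0)$ shows they interchange $0$ and $1$. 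Therefore $\N:=(L;\lor,\land,0,1,f,g)\in\cat{DO}$, and $\theta$ is then an isomorphism $\A\cong\fnt{P}_{\Gamma_{\DB_{\boldsymbol -}}}(\N)$. With the symbol identification this yields both inclusions, and hence $\DB_{\boldsymbol -}=\mathbb{V}(\fnt{P}_{\Gamma_{\DB_{\boldsymbol -}}}(\cat{DO}))$.
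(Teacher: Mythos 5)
Your proof is correct, and its overall skeleton matches the paper's: both dispose of duplication by reducing to $\Gammavari{\DB}$ plus the new pair $(f^2_2,g^2_1)$, and both obtain the substantive inclusion by starting from the product representation of the $\DB$-reduct over $\CCD$ and showing that the transported conflation is forced to have the form $(x,y)\mapsto(f(y),g(x))$ with $f,g$ dual endomorphisms. Where you genuinely diverge is in how that form is extracted. The paper's proof is computational: it \emph{defines} $f(a)=\pi_1(-(0,a))$ and $g(a)=\pi_2(-(a,0))$ outright, checks directly that each interchanges $\vee$ with $\wedge$ and $0$ with $1$, establishes the auxiliary identities $\pi_1(-(a,0))=1$ and $\pi_2(-(0,b))=0$, and then computes $-(a,b)$ via the decomposition $(a,b)=(a,0)\vee_k(0,b)$. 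Your separation-of-variables argument---playing the $\vee_t$-preservation identity against the $\vee_k$-reversal identity to get $P(x,y)\vee P(x',y)=P(x,y)\wedge P(x',y)$, hence independence of $P$ from its first argument, and dually for $Q$---reaches the same conclusion without choosing special elements and is arguably the more conceptual route; the paper's computation has the minor advantage of delivering explicit formulas for $f$ and $g$ that make the remaining verifications mechanical. You also spell out two steps the paper leaves implicit: the easy inclusion $\fnt{P}_{\Gamma_{\DB_{\boldsymbol -}}}(\cat{DO})\subseteq\DB_{\boldsymbol -}$ (the paper's proof only constructs the representation in the other direction), and the coordinate-by-coordinate check of condition (L) for $f$ and $g$, using the swap supplied by $\neg$ to reach the second coordinate, which the paper compresses into a single sentence. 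Note finally that both arguments quietly rely on conflation fixing the truth constants (your evaluations at $(0,1)$ and $(1,0)$; the paper's use of $-(1,0)=(1,0)$ and $-(0,0)=(1,1)$), which is how the bounded definition of $\DB_{\boldsymbol -}$ is to be read; this is a shared convention, not a gap in your argument.
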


\begin{proof}
Certainly 
 $\Gamma_{\DB_{\boldsymbol -}}$ duplicates $\cat{DO}$   because  
$(f^2_2,g^2_1)\in \Gamma_{\DB_{\boldsymbol -}}$
 and $\Gammavari{\DB}$ is a duplicate for 
$\Sigmavar {\CCD}$ on $\CCD$.

Now let $\A\in\DB_{\boldsymbol-}$. 
By the product representation of $\DB$ over $\CCD$, 
the bilattice reduct 
$\A_{\scriptscriptstyle \DB}
\cong \fnt{P}_{\Gamma_{\DB}}( \Lalg)$, for some $\Lalg \in \CCD$.  
We identify 
$A$ and $L\times L$ and  define $f,g\colon L\to L$ by
$f(a)=\pi_1(-(0,a))$ and $g(a)=\pi_2(-(a,0))$,
for $a\in L$. For  $a,b\in L$, 
\begin{align*}
g(a\vee b) &=\pi_2(-(a\vee b,0))=\pi_2(-((a,0)\vee_k(b,0))) 
		   =\pi_2(-(a,0)\wedge_k-(b,0)) \\ 
		&=\pi_2(-(a,0)\vee_k-(b,0))=\pi_2(-(a,0))\wedge \pi_2(-(b,0))
		  =g(a)\wedge g(b),  \\ 
g(a\wedge b) &=\pi_2(-(a\wedge b,0)) =\pi_2(-((a,0)\wedge_k(b,0)))\\
		     & =\pi_2(-(a,0))\vee \pi_2(-(b,0))=g(a)\vee g(b), \\
g(1) &=\pi_2(-(1,0))=\pi_2(1,0)=0, \quad 
g(0)=\pi_2(-(0,0))=\pi_2(1,1)=1,
\end{align*}
and similarly for $f$.
 Hence $\B=(L;\vee,\wedge,f,g,0,1)\in\cat{DO}$. 
Observe that
\allowdisplaybreaks
\begin{align*}
\pi_1(-(a,0))&=\pi_1(-((a,0)\vee_t (0,0)))
=\pi_1(-(a,0)\vee_t (1,1))=1;\\
\pi_2(-(0,b))&=\pi_2(-((0,b)\wedge_t (1,1)))
		  =\pi_1(-(0,b)\wedge_t (0,0))=0.\\
\shortintertext{Hence}
-(a,b)&=-((a,0)\vee_k(0,b))=-(a,0)\wedge_k -(0,b)\\
	&=(\pi_1(-(a,0)),\pi_2(-(a,0)))\wedge_k (\pi_1(-(0,b)),\pi_2(-(0,b)))\\
	&=(1,\pi_2(-(a,0)))\wedge_k (\pi_1(-(0,b)),0)\\
&=(1,g(a))\wedge_k (f(b),0)
	=(f(b),g(a))=[f^2_2,g^2_1](a,b).
\end{align*}
Therefore $\A\cong \fnt{P}_{\Gamma_{\DB_{\boldsymbol -}}}(\B)$.
\end{proof}

This theorem gives insight into the effect  of 
reinstating the assumptions customarily imposed on conflation and which 
we removed in passing 
from $\DBC$ to $\DB_{\boldsymbol -}$.
From the product representation for $\DB_{\boldsymbol -}$, it follows 
that $-$ is involutive if and only if~$f$ and~$g$ are.  
The resulting subvariety of $\DB_{\boldsymbol -}$ 
is a duplicate of double De Morgan algebras 
(that is, algebras in $\cat{DO}$ such that both unary operations are involutions).
Similarly,  $-$ commutes with~$\neg$ if and only if $f=g$. This time we obtain 
a subvariety of $\DB_{\boldsymbol -}$ which duplicates~$\cat{O}$.

We now want  to identify an (infinite) algebra which generates 
our base variety $\cat{DO}$ as a prevariety.
We take our cue from the variety $\cat{O}$ of Ockham algebras: 
 $\cat{O}$ is generated as a prevariety by an algebra 
$\M$  whose universe is $\{0,1\}^{\mathbb N_0}$, where 
$\mathbb N_0 = \{0,1,2.\ldots\}$;   
lattice operations and constants are obtained pointwise from 
the two-element bounded lattice and, identifying  the elements as infinite binary strings,
negation is given by a left shift followed by pointwise Boolean complementation on $\{ 0,1\}$.
 See for example \cite[Section~4]{pig} for details.  
 We may view the exponent
$\mathbb N_0$ as the free monoid on one generator~$e$,  with~$0$ as identity
and~$n$ acting as the $n$-fold composite of~$e$.

For $\cat{DO}$, analogously,  we first consider the free monoid 
$E=\{e_1,e_2\}^*$ on two generators $e_1$ and $e_2$
and identify it with the set of all finite words in the language with $e_1$ and $e_2$
 as function symbols, with the empty word corresponding to the identity element~$1$;
the monoid operation $\cdot$ is given by concatenation. 
For $s\in E$, we denote the length of~$s$ by $|s|$.  

For us, $\cat{DO}$ will serve as  a base variety.  
Accordingly we align our notation 
with that in Theorem~\ref{thm:TransferNaturalDualities}.
We now consider the algebra $\N$ with universe $\{0,1\}^E$ with  
lattice operations and constants given pointwise. 
The lattice $\{0,1\}^E$  is in fact a Boolean lattice, whose complementation operation we denote by~$c$.
 The dual endomorphisms $f$ and $g$ are given as follows.  
 For $a \in \{ 0,1\}^E$ we have 
 $f(a)(s)=c ( a (s\cdot e_1))$ and  $g(a)=c ( a(s \cdot e_2))$ for every $s\in E$. 
This gives us an algebra 
$
\N \coloneqq  ( \{0,1\}^E; \lor,\land, f,g ,0,1)\in~\cat{DO}$.

For future use we show how 
 to assign to each word $s\in E$ a unary term 
 $t_s$ in the language of $\cat{DO}$, as follows.  
If 
$s=1 $ (the empty word)  
then $t_s$ is the identity map;
 if $s=e_1\cdot s'$ then $t_s=f\circ t_{s'}$; and if $s=e_2\cdot s'$ then $t_s=g\circ t_{s'}$. 
Structural induction shows that  the term function $t_s^{\N}$ is given by
\begin{equation*} 
(t_s^{\N}(a))(e)=
\begin{cases}
	a(s\cdot e) &\mbox{if } |s| \mbox{ is even},\\
	1-a(s\cdot e) &\mbox{if } |s| \mbox{ is odd},
\end{cases}
\end{equation*}
for every $a\in N$ and $s\in E$.

\begin{prop}  \label{prop:Dock}
Let $\N$ be  defined as above. 
 Then $\cat{DO}= \ISP(\N)$.
\end{prop}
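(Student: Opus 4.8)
The plan is to prove the two inclusions $\ISP(\N)\subseteq\cat{DO}$ and $\cat{DO}\subseteq\ISP(\N)$ separately. The first is immediate: since the operations of $\N$ are computed coordinatewise from the two-element bounded lattice and from the complementation $c$, a direct check shows that $f$ and $g$ are dual endomorphisms of the lattice reduct of $\N$ (for instance $f(a\vee b)=f(a)\wedge f(b)$, because $c$ interchanges $\vee$ and $\wedge$), so $\N\in\cat{DO}$. As $\cat{DO}$ is a variety, and hence closed under $\ISP$, we get $\ISP(\N)\subseteq\cat{DO}$ at once.

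For the reverse inclusion I would fix $\A\in\cat{DO}$ and embed it into a power of $\N$. The key is a correspondence between $\cat{DO}$-homomorphisms $\A\to\N$ and $\CCD$-homomorphisms $\A_{\CCD}\to\two$ of the lattice reduct (equivalently, prime filters of $\A_{\CCD}$). Given such a lattice homomorphism $\psi$, define $\phi_\psi\colon A\to N=\{0,1\}^E$ by
\[
\phi_\psi(a)(s)=
\begin{cases}
\psi(t_s^{\A}(a)) & \text{if } |s| \text{ is even},\\
1-\psi(t_s^{\A}(a)) & \text{if } |s| \text{ is odd},
\end{cases}
\]
for $a\in A$ and $s\in E$. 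This formula is forced by the displayed description of $t_s^{\N}$ together with the requirement that evaluation at the empty word recover $\psi$; we only need the direction $\psi\mapsto\phi_\psi$, though one can check it is in fact a bijection (which is what ultimately underlies the duality for $\cat{DO}$).

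The principal obstacle, and the only part requiring real care, is to verify that each $\phi_\psi$ is genuinely a $\cat{DO}$-homomorphism. For the lattice structure, note that $t_s^{\A}$ is a lattice endomorphism when $|s|$ is even and a dual endomorphism when $|s|$ is odd; in either case the $s$-coordinate map $a\mapsto\phi_\psi(a)(s)$ is therefore a lattice homomorphism into $\two$ (complementing a dual homomorphism to $\two$ produces a homomorphism), so $\phi_\psi$ preserves $\vee,\wedge,0,1$ coordinatewise. Preservation of $f$ and $g$ is the delicate step: using $t_{s\cdot e_1}^{\A}=t_s^{\A}\circ f^{\A}$ and $t_{s\cdot e_2}^{\A}=t_s^{\A}\circ g^{\A}$ and tracking the parity of $|s|$ against that of $|s\cdot e_i|=|s|+1$, one verifies $\phi_\psi(f^{\A}(a))(s)=c(\phi_\psi(a)(s\cdot e_1))=f^{\N}(\phi_\psi(a))(s)$, and symmetrically for $g$. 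This parity bookkeeping mirrors the Ockham-algebra computation of \cite[Section~4]{pig} and is routine once the recursion is set up.

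Finally I would assemble the embedding. Since $\phi_\psi(a)(1)=\psi(a)$ (the empty word has even length and $t_1$ is the identity), distinct elements of $\A$ are separated by the maps $\phi_\psi$ as soon as they are separated by lattice homomorphisms into $\two$; and this holds because $\CCD=\ISP(\two)$, that is, prime filters separate the points of any bounded distributive lattice. Consequently the evaluation map $a\mapsto(\phi_\psi(a))_{\psi}$ is an embedding of $\A$ into the power $\N^{\CCD(\A_{\CCD},\two)}$, giving $\A\in\ISP(\N)$ and hence $\cat{DO}\subseteq\ISP(\N)$, which completes the proof.
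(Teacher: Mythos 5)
Your proposal is correct and follows essentially the same route as the paper: your $\phi_\psi$ is exactly the paper's separating map $\varphi$ (same parity-split formula via the terms $t_s$, same appeal to the Prime Ideal Theorem, same observation that evaluation at the empty word recovers $\psi$). The only cosmetic differences are that you assemble the maps into an explicit evaluation embedding into $\N^{\CCD(\A_{\CCD},\two)}$ where the paper instead cites the separation criterion of \cite[Theorem~1.3.1]{CD98} (these are equivalent), and that you write out the homomorphism and parity checks that the paper dismisses as routine.
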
 

\begin{proof}  
It will suffice to show that
 given any $\A \in \cat{DO}$ and any $a \ne b $ in~$\A$,  
there exists a $\cat{DO}$-morphism~$h$ from~$\A$ into~$\N$ such that $h(a) \ne h(b)$; 
 see \cite[Theorem 1.3.1]{CD98}.    
By the Prime Ideal Theorem
there exists a $\CCD$-morphism~$x$ from (the $\CCD$-reduct of) $\A$ 
into $\two$ with $x(a) \ne x(b)$.  
Define  $\varphi\colon \A \to \N$ by
\[
\varphi(c)(s) = 
\begin{cases}
	x(t_s(c))    & \mbox{ if } |s| \mbox{ is even,}\\
	1-x(t_s(c)) & \mbox{ if } |s| \mbox{ is odd},\\
\end{cases}
\]
  for $ c \in \A$ and $ s \in E$.
It is routine to  check that $\varphi$ is a $\CCD$-morphism 
which preserves~$f$ and~$g$. 
Finally, $\varphi(c)(1)= x(c)$, whence $\varphi (a) \ne \varphi(b)$.
\end{proof}

We now seek  a natural duality for $\cat{DO}$ which parallels 
that which is already known for the category $\cat{O}$ of Ockham algebras.
  Our treatment follows the same lines as that given for~$\cat{O}$ in \cite[Section~4]{pig}, 
  whereby a powerful version of the piggyback method is deployed.  
  (The duality for~$\cat{O}$ was originally developed by Goldberg \cite{Go83} 
  and re-derived as an early example of a  piggyback duality by Davey and Werner \cite{DWpig}.)
A general description of the piggybacking method and the ideas underlying it can be found in \cite[Section~3]{pig}.  
We wish to apply to $\cat{DO}$
a special case of \cite[Theorem~4.4]{pig}. 
   We first make some comments and establish notation.  
We piggyback over Priestley duality between  $ \CCD= \ISP(\two) $ and 
$\CP=\IScP(\twoT)$ (where $\two$ and $\twoT$ are the two-element objects 
 in~$\CCD$ and~$\CP$ with universe $\{0,1\}$, defined in the usual way). 
 We denote the hom-functors setting up the dual equivalence between~$\CCD$ and~$\CP$ by~$\fnt H$ and~$\fnt K$.  
 The aim is to find 
an element~$\omega \in \CCD(\N^\flat, \two)$ which, together with endomorphisms of~$\N$,  
captures enough information to build an alter ego~$\NT$ of~$\N$ 
which yields a full duality, in fact, a strong duality.

We now work towards showing that we can apply \cite[Theorem~4.4]{pig} 
to $\cat{DO}=\ISP(\N)$, where $\N$  is as defined above.  
We shall take 
$\omega  \colon \N \to \two$ to be the projection  map given by  $\omega (a) =a(1)$.
  We want to set up an alter ego
$\NT= (\{0,1\}^E; G,R,\Tp)$ so that in particular $\NT$ has a
Priestley space reduct $\NT^\flat$ such that $\omega \in \CP(\NT^\flat,\twoT)$.  
Moreover we need the  structure $\NT$ to be chosen in such a way 
that the conditions (1)--(3) in   
\cite[Theorem~4.4]{pig} are satisfied. 
We define  $\Tp$ to be  the product topology on  $N= \{0,1\}^E$ derived from the discrete
topology on $\{0,1\}$; this is compact and Hausdorff and 
 makes $\N$ into a topological algebra.  
 We now need to specify~$G$ and~$R$.
We would expect~$R$ to contain an order relation~$\preccurlyeq$ such that
$(\{0,1\}^E; \preccurlyeq ,\Tp) \in \CP$.  
For Ockham algebras---where 
one uses the free monoid on one generator as the exponent rather than~$E$---%
the corresponding order relation is the {\it alternating order} 
in which alternate coordinates are order-flipped; see \cite[Section~7.5]{CD98}
(and recall the comment about De Morgan algebras, a subvariety of~$\cat{O}$,
in Section~\ref{NatViaDupl}).  
 The key point  is that a 
composition of an even (respectively odd) number of order-preserving self-maps
on an ordered set  is order-preserving (respectively order-reversing). 
Hence the definition of $\preccurlyeq$ in Lemma~\ref{lem:Cond3} is entirely natural.

\begin{lem}\label{lem:Cond3}
Let $\N$ be  as above.
Then  $\preccurlyeq$, given by 
\[
a \preccurlyeq b \Longleftrightarrow  \forall z \in E\,
\begin{cases}
   a(s) \leq b(s) & \text{if }|s| \mbox{ is even,} \\
   a(s) \geq b(s) & \text{if } |s| \mbox{ is odd,}
\end{cases}
\] 
is an order relation making $(\{ 0,1\}^E; \preccurlyeq, \Tp)$ a Priestley space.  
Moreover $\preccurlyeq$ is the universe of a subalgebra of~$\N^2$ and this 
subalgebra is the unique maximal subalgebra of 
$
(\omega,\omega)^{-1}(\leq)=
 \{\, (a,b)\in N^2 \mid \omega(a) \leq \omega (b)
\, \}$.
\end{lem}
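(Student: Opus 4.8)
The plan is to dispatch the three assertions in turn: reduce the Priestley-space claim to a known space by a coordinate flip, verify the subalgebra claim by routine coordinatewise closure checks, and reserve the real content for the maximality-and-uniqueness claim, where the term functions $t_s$ carry the load. First I would introduce the coordinate-flipping self-map $\theta\colon\{0,1\}^E\to\{0,1\}^E$ given by $\theta(a)(s)=a(s)$ when $|s|$ is even and $\theta(a)(s)=1-a(s)$ when $|s|$ is odd. A one-line check shows $a\preccurlyeq b\iff\theta(a)\leq\theta(b)$, where $\leq$ is the product order. Since $\theta$ is a self-inverse homeomorphism, it is an isomorphism of topological ordered spaces from $(\{0,1\}^E;\preccurlyeq,\Tp)$ onto $(\{0,1\}^E;\leq,\Tp)$; the latter is a power of the two-element Priestley space, hence a Priestley space, so the former is too. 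In particular $\preccurlyeq$ is a closed partial order.

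Next, to see that $\preccurlyeq$ is the universe of a subalgebra of $\N^2$, I would check that $\{(a,b)\mid a\preccurlyeq b\}$ is closed under each operation applied coordinatewise. Closure under $\lor$, $\land$ and the constants $0,1$ is immediate, since join and meet are monotone for both $\leq$ and $\geq$, so the order-reversed (odd) coordinates cause no trouble. The only substantive cases are $f$ and $g$. Given $a\preccurlyeq b$, I would verify $f(a)\preccurlyeq f(b)$ using $f(a)(s)=1-a(s\cdot e_1)$: the extra letter flips the parity of the word, and the complement reverses the inequality, so the even-coordinate requirement on $f(a),f(b)$ at $s$ is exactly the odd-coordinate hypothesis on $a,b$ at $s\cdot e_1$, and conversely. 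The same computation with $e_2$ in place of $e_1$ handles $g$. Conceptually this realises the principle flagged before the lemma: $f$ and $g$ are order-reversing for the lattice order, and $\preccurlyeq$ is rigged precisely so that the parity shift they induce renders them order-preserving for $\preccurlyeq$.

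The heart of the proof is the final claim. Containment $\preccurlyeq\,\subseteq(\omega,\omega)^{-1}(\leq)$ is trivial: taking $s=1$ in the definition of $\preccurlyeq$ gives $a(1)\leq b(1)$, that is $\omega(a)\leq\omega(b)$. For maximality and uniqueness I would show that every subalgebra $S$ of $\N^2$ contained in $(\omega,\omega)^{-1}(\leq)$ already lies inside $\preccurlyeq$. Fix $(a,b)\in S$. Since $S$ is closed under $f$ and $g$, it is closed under every composite $t_s$, so $(t_s(a),t_s(b))\in S\subseteq(\omega,\omega)^{-1}(\leq)$ for all $s\in E$, whence $\omega(t_s(a))\leq\omega(t_s(b))$. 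Now I would invoke the displayed formula for $t_s^{\N}$ at the coordinate $e=1$: it gives $\omega(t_s(a))=t_s(a)(1)=a(s)$ when $|s|$ is even and $=1-a(s)$ when $|s|$ is odd. Substituting into the inequalities $\omega(t_s(a))\leq\omega(t_s(b))$ reproduces, as $s$ ranges over $E$, exactly the coordinatewise conditions defining $a\preccurlyeq b$. Hence $S\subseteq\preccurlyeq$; combined with the fact that $\preccurlyeq$ is itself such a subalgebra, this shows $\preccurlyeq$ is the largest, and so the unique maximal, subalgebra inside $(\omega,\omega)^{-1}(\leq)$.

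I expect the main obstacle to be precisely this last step. The point is to recognise that probing the single coordinate picked out by $\omega$ through the whole family of term functions $t_s$ recovers full coordinatewise information, thereby forcing any $\omega$-compatible subalgebra to respect $\preccurlyeq$. Everything else is bookkeeping; this is where the piggyback philosophy, that the one distinguished homomorphism $\omega$ together with the operations carries all the separating power, actually bites.
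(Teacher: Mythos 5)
Your proof is correct and follows essentially the same route as the paper's: routine coordinatewise closure checks (with the parity-shift argument for $f$ and $g$) for the subalgebra claim, and then probing the single coordinate $1$ via $\omega$ through the whole family of term functions $t_s$ to force any subalgebra of $(\omega,\omega)^{-1}(\leq)$ inside $\preccurlyeq$, which yields unique maximality. Your only cosmetic deviations are establishing the Priestley-space claim by conjugating with the parity-flip homeomorphism $\theta$ rather than, as the paper does, viewing the space as a product of copies of $\twoT$ and $\twoT^\partial$, and proving directly that $\preccurlyeq$ is the \emph{largest} subalgebra contained in $(\omega,\omega)^{-1}(\leq)$ instead of fixing a maximal one and showing it equals $\preccurlyeq$ --- if anything a marginally cleaner framing of the same argument.
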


\begin{proof}  
Each of $\twoT$ and the structure
  $\twoT^\partial $  (that is, $\twoT$ with  the  order reversed)
is a Priestley space. 
It follows that the topological structure $(\{0,1\}^E; \preccurlyeq, \Tp)$ 
is a product of Priestley spaces and so itself a Priestley space.

Take  $a,b,c,d$ in $N$ such that $a\preccurlyeq b$ and $c\preccurlyeq d$ and let $s\in E$. 
Then
\begin{alignat*}{2}
(a\wedge c)(s) &=a(s)\wedge c(s)\leq b(s)\wedge d(s)=(b\wedge d)(s) \qquad &&\text{if }
|s| \mbox{ is even,}\\  
(a\wedge c)(s) &=a(s)\wedge c(s)\geq b(s)\wedge d(s)=(b\wedge d)(s) &&
\text{if } |s| \mbox{ is odd.}
\end{alignat*}
Hence $a\wedge c\preccurlyeq b\wedge d$. Similarly $a\vee c\preccurlyeq b\vee d$.
Also $0\preccurlyeq 0$ and $1 \preccurlyeq 1$.
If $|s|$ is even, 
$f(a)(s)=(c\circ a \circ e_1)(s)= 1-(a(e_1\cdot s))\leq 1-(b(e_1\cdot s))=f(b)(s)$,
since $a\preccurlyeq b$ and $|e_1\cdot s|$ is odd. 
Similarly, if $|s|$ is odd then  $f(a)(s)\geq f(b)(s)$.  
Therefore $f(a)\preccurlyeq f(b)$. 
Likewise  $g(a)\preccurlyeq g(b)$. 
Thus   $\preccurlyeq$ is indeed the universe of a subalgebra of $\N^2$.

Now let $r$ be the universe of a subalgebra of $\N^2$ maximal with respect to inclusion in
$(\omega,\omega)^{-1}(\leq)$.  
Then, with $t_s$ as defined earlier for $s\in E$, we have  
\begin{multline*}(a,b)\in r  
\ \Longrightarrow \ (\forall s\in E )\bigl((t_s(a),t_s(b)) \in r\bigr)
            \     \Longrightarrow \ (\forall s \in E) \bigl( t_s(a) \leq t_s(b)\bigr)\\
                 \Longrightarrow  (\forall s \in E)(\forall e \in E)
 \bigl( t_s(a)(e) = 1 \implies   t_s(b)(e) = 1\bigr).
\end{multline*}
But
\[
(t_s(a))(e)=\begin{cases}
a(s\cdot e) &\mbox{if } |s| \mbox{ is even,}\\ 
1-a(s\cdot e) &\mbox{if } |s| \mbox{ is odd.}
\end{cases}
\]
We deduce that $r$ is a subset of $\preccurlyeq$.  
In addition $a\preccurlyeq b$ implies $\omega (a) \leq \omega (b)$:
consider $s = 1$.  
Maximality of~$r$ implies that $r$ equals $\preccurlyeq$.
Consequently $\preccurlyeq$ is the unique maximal subalgebra contained in
$(\omega,\omega)^{-1}(\leq)$.
 \end{proof}

We now introduce the operations we shall include in our alter ego~$\NT$. 
 Let  the map $\gamma_i \colon E \to E$ be given by
$\gamma_i(s) = s\cdot e_i$.   
Then  we can  define an endomorphism $u_i$  of $\N$ 
by $u_i(a) = a \circ \gamma_i $,  for $i=1,2$.  
These maps are continuous with respect to the topology $\Tp$ we have put on~$N$.
We define
\[
\NT:= (\{0,1\}^E; u_1,u_2, \preccurlyeq, \Tp).
\]
Then $\NT$ is compatible with~$\N$.  
 We let 
$\CY \coloneqq  \IScP(\NT)$ be the topological prevariety generated by~$\NT$
and by $^\flat$  the forgetful functor from~$\CY$ into~$\CP$  which suppresses
the operations $u_1$ and~$u_2$.   We note that now $\omega$, as defined earlier, 
may be seen to belong to  $\CCD (\N^\flat, \two) \cap 
\CP(\NT^\flat,\twoT)$.
The following  two lemmas concern the interaction of $\N$, $\NT$ and~$\omega$ 
as regards separation properties.

\begin{lem}  \label{lem:Cond1}
Assume that $\N$, $\NT$ and $\omega$ are defined as above. 
 Then, given $a\ne b$ in $N$,
 there exists a unary term~$u$ 
 in the language 
of $(N; u_1,u_2)$  such that $\omega(u(a))\ne \omega(u(b))$.
\end{lem}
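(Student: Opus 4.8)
The plan is to reduce the problem to a single coordinate on which $a$ and $b$ already differ. The key observation is that the unary terms in the language $(N;u_1,u_2)$ are precisely the words in $u_1$ and $u_2$, and that each such word, after composition with $\omega$, simply reads off one coordinate of its argument. So I would first make the bookkeeping between words in $u_1,u_2$ and words in $E$ explicit, and then the separation statement falls out at once.

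Concretely, I would begin by recording the action of the generators. Since $u_i(a)=a\circ\gamma_i$ and $\gamma_i(s)=s\cdot e_i$, we have $u_i(a)(s)=a(s\cdot e_i)$ for every $s\in E$, while $\omega(c)=c(1)$ by definition. A routine induction on the length $|t|$ then shows that, for each word $t=e_{i_1}\cdots e_{i_k}\in E$, the composite term $u_t:=u_{i_1}\circ\cdots\circ u_{i_k}$ (with the empty word giving the identity term) satisfies
\[
(u_t(a))(s)=a(s\cdot t)\qquad\text{for all }a\in N,\ s\in E.
\]
Evaluating at $s=1$ and using $\omega(c)=c(1)$ yields the coordinate-extraction identity
\[
\omega(u_t(a))=a(t)\qquad\text{for all }a\in N,
\]
so that the term $u_t$ picks out the $t$-coordinate of its argument.

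With this identity in hand the lemma is immediate: given $a\ne b$ in $N=\{0,1\}^E$, there is some $t\in E$ with $a(t)\ne b(t)$, and taking $u=u_t$ gives $\omega(u(a))=a(t)\ne b(t)=\omega(u(b))$, as required. I do not expect any real obstacle here; the only thing needing verification is the coordinate-extraction identity, which is a mechanical induction matching words in $u_1,u_2$ against words in $e_1,e_2$. The single point deserving care is keeping the order of composition aligned with the order of concatenation---one checks directly that $u_1\circ u_2$ corresponds to $e_1\cdot e_2$ rather than its reverse---but once this convention is fixed the computation is entirely routine.
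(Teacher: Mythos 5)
Your proof is correct and follows essentially the same route as the paper: locate a coordinate $s\in E$ where $a$ and $b$ differ, and verify by induction that a composite of the maps $u_1,u_2$ followed by $\omega$ extracts the $s$-coordinate. Your care over the composition convention is well placed---your identity $(u_{i_1}\circ\cdots\circ u_{i_k})(a)(s)=a(s\cdot e_{i_1}\cdots e_{i_k})$ is the correct one, and in fact the paper's own proof writes the composite in the opposite order ($u_{i_n}\circ\cdots\circ u_{i_1}$), a harmless slip that your convention silently repairs.
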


\begin{proof}  
Let $a \ne b \in \N$.  
There exists $s \in E$ with $s \ne 1$ such that $a(s) \ne b(s)$.  
Write $s$ as a concatenation 
$e_{i_1}\cdot \, \, \cdots \, \, \cdot e_{i_n}$, where $i_1,\ldots,i_n\in\{1,2\}$. 
For each   
$j=1,
\dots,n$, there is an associated unary term
$u_j$ such that,  for all~$w \in E$,
\[
(u_{i_j}(a))(w) = (a \circ \gamma_{i_j}) (w) = a(w \cdot e_{i_j}).
\]
Write   $u_{i_n}\circ\ldots\circ u_{i_1}$ as~$u_s$.  
Then $u_s(c)(1) = c(s)$ for all $c \in \N$ and hence  
\[
(\omega \circ u_s)(a) = u_s(a)(1) = a(s) \ne b(s) = u_s(b)(1) = (\omega  \circ u_s)(b). \qedhere  
\]
\end{proof}

\begin{lem}\label{lem:Cond2} 
If  $a\not\preccurlyeq b$
in $\NT^\flat$, then there exists a unary term function~$t$ of ~$\N$ 
such that $\omega(t(a))= 1$ and $\omega(t(b))= 0$.
\end{lem}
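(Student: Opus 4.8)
The plan is to exhibit the required term explicitly as one of the unary terms $t_s$ introduced before Proposition~\ref{prop:Dock}, choosing the word $s$ to be a witness for the failure of $a \preccurlyeq b$. Unlike the situation in Lemma~\ref{lem:Cond1}, here I will not need $s \ne 1$, so the empty word is allowed as a witness.

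First I would unpack the hypothesis. By the definition of $\preccurlyeq$ given in Lemma~\ref{lem:Cond3}, the relation $a \not\preccurlyeq b$ means that the defining implication fails at some $s \in E$; since all coordinates lie in $\{0,1\}$, this splits into exactly two cases. Either $|s|$ is even with $a(s) = 1$ and $b(s) = 0$, or $|s|$ is odd with $a(s) = 0$ and $b(s) = 1$.

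Next I would set $t := t_s$ and evaluate $\omega(t_s(a))$ and $\omega(t_s(b))$, using the closed formula for $t_s^{\N}$ recorded just before Proposition~\ref{prop:Dock} together with the identity $s \cdot 1 = s$. When $|s|$ is even, $\omega(t_s(a)) = (t_s(a))(1) = a(s) = 1$ and likewise $\omega(t_s(b)) = b(s) = 0$. When $|s|$ is odd, $\omega(t_s(a)) = (t_s(a))(1) = 1 - a(s) = 1$ and $\omega(t_s(b)) = 1 - b(s) = 0$. In either case $\omega(t_s(a)) = 1$ and $\omega(t_s(b)) = 0$, which is exactly the assertion.

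The key observation I would stress is that the parity twist built into the definition of $\preccurlyeq$ coincides with the alternation of complementations carried by the term function $t_s$; this is what causes the even and odd cases to collapse to the same conclusion, so that the single term $t_s$ serves both. Consequently there is no genuine obstacle: the entire content is selecting the witnessing $s$ and reading off the two coordinate values, with the only care being to track the alternating complementation correctly across the even/odd split.
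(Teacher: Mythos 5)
Your proposal is correct and is essentially identical to the paper's own proof: both take a word $s\in E$ witnessing $a\not\preccurlyeq b$ (split by the parity of $|s|$), apply the term $t_s$, and evaluate at the empty word via $\omega(t_s(a))=t_s(a)(1)$, so that the parity twist in $\preccurlyeq$ cancels the alternating complementation in $t_s^{\N}$ and both cases yield $\omega(t_s(a))=1$, $\omega(t_s(b))=0$. No differences worth noting.
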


\begin{proof}
We have
\[
a \not \preccurlyeq b \Longleftrightarrow \exists s \in E
\begin{cases}
     a(s) = 1 \ \& \ b(s) = 0 &\text{if   $|s|$ is even},\\
     a(s) = 0 \ \& \ b(s)= 1 & \text{if  $|s|$ is odd}.
\end{cases}
 \]
When $|s|$ is even,
$\omega(t_s(a))=t_s(a)(1)=a(s)=1$ and $\omega(t_s(b))=t_s(b)(1)=b(s)=0$.
 Similarly, if $|s|$ is odd,
$\omega(t_s(a))=t_s(a)(1)=c\circ a(s)=1-a(s)=1$ and $\omega(t_s(b))=t_s(b)(1)=c\circ b(s)=1-b(s)=0$. 
\end{proof}

\begin{thm}[Strong Duality Theorem for Double Ockham Algebras]
\label{thm:DockDuality}  
Let
$\N= (\{0,1\}^E; \lor,\land,f,g,0,1)$
 and
$\NT= (\{0,1\}^E; u_1,u_2,\preccurlyeq ,\Tp)$  
be as defined above.   
Let $\omega \in \CCD (\N^\flat, \two) \cap \CP(\NT^\flat,\twoT)$ 
be given by evaluation at~$1$, the identity of the monoid~$E$.  
Let $\fnt D \colon \cat{DO} \to \cat Y$ and $\fnt E \colon \cat Y \to \cat{DO}$ 
be the hom-functors:  $\fnt D\coloneqq \cat{DO}(-,\N)$ and 
$\fnt E \coloneqq  \CY (-,\NT)$. 
Then 
$\NT$ strongly dualises $\N$, that is, $\fnt D$ and $\fnt E$ 
establish a strong duality between~$\cat{DO}$ and~$\CY$. 
Moreover 
\[
  \fnt D(\A)^\flat \cong \fnt H(\A^\flat) \text{  in }\CP\quad \text{and} \quad
  \fnt E(\Y)^\flat \cong \fnt K(\Y^\flat) \text{  in }\CCD,
 \]
 for $\A \in \cat{DO} $ and $\Y \in \CY$, where the isomorphisms are  set up by
$\Phi_\omega^\A \colon x \mapsto \omega \circ x $,  for $x \in \fnt D(\A)$, and
$\Psi_\omega^\Y \colon \alpha \mapsto\omega \circ \alpha$, for $\alpha \in 
\fnt E(\Y)$.
\end{thm}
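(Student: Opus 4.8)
\emph{Plan of proof.} The strategy is to apply the piggyback theorem \cite[Theorem~4.4]{pig} directly: the entire development preceding this statement has been arranged precisely to furnish its hypotheses. First I would record the structural prerequisites establishing that $\NT$ is a legitimate alter ego compatible with~$\N$. The maps $u_1$ and $u_2$ are endomorphisms of~$\N$, continuous with respect to~$\Tp$, and Lemma~\ref{lem:Cond3} shows that $\preccurlyeq$ is the universe of a topologically closed subalgebra of~$\N^2$; hence every operation and relation of~$\NT$ meets the compatibility requirements. The choice of~$\omega$ as evaluation at the identity~$1$ of the monoid~$E$ places it in $\CCD(\N^\flat,\two)\cap\CP(\NT^\flat,\twoT)$, exactly the membership needed to piggyback over Priestley duality between~$\CCD$ and~$\CP$.

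Next I would verify that the three numbered hypotheses of \cite[Theorem~4.4]{pig} are satisfied; these are the content of the three lemmas already proved, whose labels were chosen to signal this correspondence. Lemma~\ref{lem:Cond1} supplies the separation condition, namely that the unary terms in $u_1,u_2$, post-composed with~$\omega$, separate any two distinct elements of~$N$. Lemma~\ref{lem:Cond2} supplies the order-separation condition, that whenever $a\not\preccurlyeq b$ some unary term function witnesses this through~$\omega$ by sending $a$ to~$1$ and $b$ to~$0$. Lemma~\ref{lem:Cond3} identifies $\preccurlyeq$ as the \emph{unique} maximal subalgebra of~$\N^2$ contained in $(\omega,\omega)^{-1}(\leq)$, which is the piggyback-relation condition licensing the use of a single relation in the alter ego. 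With all three in place, \cite[Theorem~4.4]{pig} yields at once that $\NT$ yields a full, indeed strong, duality on $\cat{DO}=\ISP(\N)$, so that $\fnt D$ and $\fnt E$ establish a strong duality between $\cat{DO}$ and~$\CY$.

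Finally, the same theorem delivers the asserted comparison with the underlying Priestley duality: post-composition with~$\omega$ gives the natural isomorphisms $\Phi_\omega^\A\colon \fnt D(\A)^\flat\cong \fnt H(\A^\flat)$ in~$\CP$ and $\Psi_\omega^\Y\colon \fnt E(\Y)^\flat\cong \fnt K(\Y^\flat)$ in~$\CCD$, so that the double-Ockham dual spaces sit concretely over their Priestley reducts. I do not expect any individual calculation to be the obstacle---each lemma is routine and the structural checks are immediate. The one point demanding care is the bookkeeping of matching the data verbatim to \cite[Theorem~4.4]{pig}: confirming that a single piggybacking morphism~$\omega$ together with the two endomorphisms $u_1,u_2$ (whose compositions generate all the unary terms used in Lemmas~\ref{lem:Cond1} and~\ref{lem:Cond2}) constitute exactly the admissible input for that theorem, and in particular that this suffices for the \emph{strong}, and not merely full, conclusion.
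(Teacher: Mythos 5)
Your proposal is correct and follows essentially the same route as the paper: the paper's proof likewise consists of confirming that the compatibility set-up is in place and that Lemmas~\ref{lem:Cond3}, \ref{lem:Cond1} and~\ref{lem:Cond2} verify Conditions~(1)--(3) of \cite[Theorem~4.4]{pig}, which then delivers the strong duality together with the isomorphisms $\Phi_\omega^\A$ and $\Psi_\omega^\Y$ relating the natural duals to the Priestley duals. Your additional remark that the strongness (not merely fullness) of the conclusion comes directly from that theorem matches the paper's usage exactly.
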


\begin{proof}  
We simply need to confirm that the conditions 
of~\cite[Theorem~4.4]{pig} are satisfied.  
We have everything set up to ensure that all the functors work as the theorem requires.  
In addition Lemmas~\ref{lem:Cond3}--\ref{lem:Cond2}
tell us that Conditions~(1)--(3) in the  theorem are satisfied.  
\end{proof}

Some remarks are in order here.  
We stress that 
it is critical that we could find a map $\omega$ which acts as a morphism 
both on the algebra side and on the dual side, and has 
the  separation properties set out in Lemmas~\ref{lem:Cond1} and~\ref{lem:Cond2}.
We also observe that for our  application
of \cite[Theorem~4.4]{pig}, its Condition~(3) is met in 
a simpler way than the theorem allows for: the special  form of the 
$f,g$ 
({\it viz.}~dual endomorphisms  with respect to the bounded lattice operations) 
that forces 
 $(\omega,\omega)^{-1}(\leq)$ to contain just one maximal subalgebra.

We should comment  too  on how 
our natural duality for $\cat{DO}$
relates to  a Priestley-style duality for~$\cat{DO}$.  
The latter
can be set up in just the same way as that for~$\cat{O}$ originating in~\cite{Urq}. 
 This duality is an enrichment of that between~$\CCD$ and~$\CP$,
  whereby~$f$ and~$g$ are captured on the dual side 
  via a pair of order-reversing continuous maps $p$ and~$q$, 
  and morphisms are required to preserve these maps.   
Theorem~\ref{thm:DockDuality} tells us  that, for any $\A \in \cat{DO}$,  
there is an isomorphism 
between the Priestley space reduct $\fnt D(\A)^\flat$ 
of the natural dual of $\A\in \cat{DO}$ and the 
Priestley dual $\fnt H(\A^\flat)$ of the $\CCD$-reduct of~$\A$.  
Both these Priestley spaces carry additional structure:  
$u_1$ and~$u_2$ in the former case and~$p$ and~$q$ in the latter.  
When the reducts of 
the natural and Priestley-style dual spaces of  the algebras are identified 
these pairs of maps coincide.
 Thus the two dualities for~$\cat{DO}$ are essentially the same 
 and one may toggle between them at will. 
  We have a new example here  of a `best of both worlds' scenario,
   in which we have both the advantages of a natural
duality 
and the benefits, pictorially,  
of  a duality based on Priestley spaces.  
See \cite[Section~3]{CPcop},  \cite[Section~6]{CPOne}  
and \cite[Section~4]{pig}
for earlier recognition of  occurrences of  this phenomenon: 
 other varieties for which it arises are De Morgan algebras and Ockham algebras.  
 In general it is not hereditary:  it fails to occur for Kleene algebras, for example.

Combining  our results we arrive at our duality for the variety 
$\DB_{\boldsymbol -}$.

\begin{thm}[Strong Duality Theorem for Bounded Distributive Bilattices with Generalised Conflation]  
\label{thm:GenConflationDuality}
Let $\NT= (\{0,1\}^E; u_1,u_2, \preccurlyeq, \Tp)$ be as in Theorem~{\upshape\ref{thm:DockDuality}}.  
Then $\NT \times \NT$ yields a strong duality on $\DB_{\boldsymbol -}$. 
Moreover the dual category for this duality is~$\CY:= \IScP(\NT)$ 
 which may, in turn, be identified with the category $\CP_{\cat{DO}}$ of double Ockham spaces. 
 \end{thm}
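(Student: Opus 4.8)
The plan is to read the assertion straight off the apparatus already assembled, since the Duality Transfer Theorem (Theorem~\ref{thm:TransferNaturalDualities}) was designed precisely for this purpose. The only genuine work is to exhibit $\DB_{\boldsymbol -}$ in the form $\ISP(\fnt{P}_\Gamma(\N))$ required by that theorem, where I write $\Gamma := \Gammavari{\DB_{\boldsymbol -}}$ and $\N$ is the generator of Proposition~\ref{prop:Dock}; once this is in place, the duality, its strength, and its dual category all follow automatically.

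First I would record the base-level data. By Proposition~\ref{prop:Dock} we have $\cat{DO} = \ISP(\N)$, and since $\cat{DO}$ is a variety containing $\N$ this gives $\cat{DO} = \ISP(\N) = \mathbb{V}(\N)$. By Proposition~\ref{prop:Ock-conflation}, $\Gamma$ duplicates $\cat{DO}$; as conditions (L), (M), (P) are universally quantified over the class and so are inherited by the single algebra $\N$, it follows that $\Gamma$ duplicates $\N$. The same proposition gives $\DB_{\boldsymbol -} = \mathbb{V}(\fnt{P}_\Gamma(\cat{DO}))$.

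Next I would upgrade this to the prevariety presentation $\DB_{\boldsymbol -} = \ISP(\fnt{P}_\Gamma(\N))$. Applying Corollary~\ref{Cor:ISPDuplicated}(c) to $\N$ — legitimate because $\mathbb{V}(\N) = \ISP(\N)$ — yields $\mathbb{V}(\fnt{P}_\Gamma(\N)) = \ISP(\fnt{P}_\Gamma(\N))$, so it suffices to check $\mathbb{V}(\fnt{P}_\Gamma(\N)) = \DB_{\boldsymbol -}$. The inclusion $\subseteq$ is immediate, since $\fnt{P}_\Gamma(\N) \in \fnt{P}_\Gamma(\cat{DO}) \subseteq \DB_{\boldsymbol -}$ and $\DB_{\boldsymbol -}$ is a variety. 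For $\supseteq$ I would use that the operations of $\fnt{P}_\Gamma(\A)$ act coordinatewise, so that $\fnt{P}_\Gamma$ commutes with products and preserves embeddings; hence for any $\A \in \cat{DO} = \ISP(\N)$ an embedding $\A \hookrightarrow \N^I$ yields $\fnt{P}_\Gamma(\A) \hookrightarrow \fnt{P}_\Gamma(\N^I) \cong (\fnt{P}_\Gamma(\N))^I$, so $\fnt{P}_\Gamma(\A) \in \ISP(\fnt{P}_\Gamma(\N))$ and therefore $\DB_{\boldsymbol -} = \mathbb{V}(\fnt{P}_\Gamma(\cat{DO})) \subseteq \mathbb{V}(\fnt{P}_\Gamma(\N))$. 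This step — recognising the single algebra $\fnt{P}_\Gamma(\N)$ as a prevariety generator of the entire duplicate rather than merely a variety generator — is the crux of the argument, and it is the only place where a little care beyond bookkeeping is needed, although the verification is short.

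With $\DB_{\boldsymbol -} = \ISP(\fnt{P}_\Gamma(\N))$ established I would simply invoke Theorem~\ref{thm:TransferNaturalDualities}, feeding in the strong duality for $\cat{DO} = \ISP(\N)$ furnished by $\NT$ in Theorem~\ref{thm:DockDuality}. The transfer theorem then delivers that $\NT \times \NT$ yields a strong duality on $\DB_{\boldsymbol -}$, with the same dual category $\CY = \IScP(\NT)$. Finally, the identification of $\CY$ with the category $\CP_{\cat{DO}}$ of double Ockham spaces requires no fresh argument: by the remarks following Theorem~\ref{thm:DockDuality}, every object of $\CY$ carries a Priestley-space reduct on which the endomorphism duals $u_1,u_2$ act, and on identifying these reducts these maps coincide with the order-reversing continuous maps $p,q$ of the Priestley-style double Ockham dual, so the two descriptions of the dual category agree.
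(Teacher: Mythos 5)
Your proposal is correct and follows essentially the same route as the paper, which presents Theorem~\ref{thm:GenConflationDuality} as an immediate combination of the strong duality for $\cat{DO}$ (Theorem~\ref{thm:DockDuality}) with the Duality Transfer Theorem, via Propositions~\ref{prop:Ock-conflation} and~\ref{prop:Dock}. Your explicit verification that $\DB_{\boldsymbol -}=\ISP\bigl(\fnt{P}_{\Gamma}(\N)\bigr)$, using Corollary~\ref{Cor:ISPDuplicated}(c) together with the fact that $\fnt{P}_{\Gamma}$ preserves embeddings and products, is exactly the bookkeeping the paper leaves implicit, and it is done correctly.
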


 To illustrate the rewards derived from  a natural duality for $\fnt F_{\DB_{\boldsymbol -}}$, 
we highlight the simple description of free objects that follows  from
Theorem~\ref{thm:GenConflationDuality}: for 
a non-empty set~$S$,  
the free algebra $\fnt F _{\DB_{\boldsymbol -}}(S)$
on~$S$ has  $(\NT^2)^S$ as its natural dual space. 
Hence
$\fnt F _{\DB_{\boldsymbol -}}(S)$ can be identified with 
the family of continuous  structure-preserving 
maps from $(\NT^2)^S$ into $\NT^2$, with the operations defined pointwise.  
(Recall the remark on free algebras in Section~\ref{Sec:NatDual}.)

\section{Dualities for pre-bilattice-based varieties}   \label{Sec:prebilat}

In this final section we consider dualities for pre-bilattice-based varieties. 
Here we call on the adaptation of the product representation theorem given in~\cite[Theorem 9.1]{prod}.
Hitherto in this paper  we have worked with dualities for prevarieties
of the form~$\ISP(\M)$, thereby encompassing dualities for many classes
of interest in the context of bilattices.    
However when we drop negation
and so move from bilattices to pre-bilattices the situation changes and we 
encounter classes of the form~$\ISP(\CM)$, 
where~$\CM$ is a finite set of algebras over a common language. 
For example, for 
distributive pre-bilattices $\CM$ consists of a pair of two-element algebras,
one with truth and knowledge orders equal, the other with these as order duals.  
Fortunately a form of natural duality 
theory exists which is applicable to classes of the form~$\ISP(\CM)$;  
this  makes use of multisorted structures on the dual side.
So in this section we shall consider dualities for pre-bilattice-based 
varieties.   
As a starting point we have the 
treatment of distributive pre-bilattices given in \cite[Sections~9 and~10]{CPOne};
a self-contained summary of the rudiments of  multisorted duality
theory can also be found there or see \cite[Chapter~7]{CD98}.

We first recall how \cite[Theorem~9.1]{prod} differs from 
Theorem~\ref{thm:GeneralProductEquiv}.   
We start from a base class $\mathbb V(\CN)$, where $\CN$
 is a class of algebras over a common  
 language~$\Sigma$.
 Let~$\Gamma$ and  $\fnt P_\Gamma (\CN)$ be as in Section~\ref{Sec:Prelim}.  
Negation in a product bilattice links the two factors, and condition~(P) from
the definition of duplication by~$\Gamma$ reflects this.  
In the absence of negation,~(P) is dropped and the following condition is substituted:
\begin{newlist}
\item[(D)] for $(t_1,t_2)\in\Gamma$ with $n_{(t_1,t_2)}=n$, there exist  $n$-ary 
$\Sigma$-terms $r_1$ and $r_2$ such that
$t_1(x_1,\ldots,x_{2n}) =r_1(x_{1},x_3,\ldots, x_{2n-1})$ and 
$t_2(x_1,\ldots,x_{2n}) =r_2(x_{2},x_4,\ldots, x_{2n})$.
\end{newlist}
  A product algebra associated with~$\Gamma$ now takes the form  
\[
\PP\odotG{\Gamma}\Q
=( P\times Q; \{ [t_1,t_2]^{\PP\odotG{\Gamma}\Q}\mid (t_1,t_2)\in\Gamma \} ),
\]
where  $\PP,\Q$ belong to the base variety $\baseV=\mathbb{V}(\CN)$. 
This construction is used to define a functor $\odotG{\Gamma}\colon\baseV\times\baseV\to \duplicateV $ as follows:
 \begin{alignat*}{3}
&\text{on objects:} & \hspace*{2.5cm}  & (\PP,\Q) \mapsto \PP\odotG{\Gamma}\Q, 
  \hspace*{2.5cm}  \phantom{\text{on objects:}}&&\\
&\text{on morphisms:}  & &  \odotG{\Gamma}(h_1,h_2) (a,b)=(h_1(a),h_2(b)).
\end{alignat*}

\begin{thm}  {\upshape{\cite[Theorem~9.3]{prod}}} \label{thm:ICP}
Let $\CN$  be a class of $\Sigma$-algebras and 
let $\Gamma$ a set of pairs of $\Sigma$-terms satisfying
{\upshape (L), (M)} and {\upshape(D)}.  
Let $\baseV=\mathbb{V}(\CN)$. 
Then the functor $\odotG{\Gamma}\colon \baseV\times \baseV\to\duplicateV$,
sets up a categorical equivalence between 
$\baseV\times \baseV$ 
and $\duplicateV=\mathbb{V}(\{\PP\odotG{\Gamma}\Q\mid \PP,\Q\in\mathbb{V}(\CN)\})$.
\end{thm}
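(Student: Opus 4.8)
The plan is to show that the functor $\odotG{\Gamma}$ is full, faithful and essentially surjective, following exactly the pattern used for the single-variable functor of Theorem~\ref{thm:GeneralProductEquiv}, with condition (D) now playing the role that (P) played there. The engine driving all three verifications is the binary $\Gamma$-term $v$ supplied by (M): on any product algebra $\PP\odotG{\Gamma}\Q$ it acts by $v((a,b),(c,d))=(a,d)$, so that $v$ is a rectangular-band term recording the coordinate structure, while (D) guarantees that every basic operation decouples into independent actions on the first and second coordinates, and (L) guarantees that each basic $\Sigma$-operation is recovered as one of those coordinatewise actions. Faithfulness is then immediate: if $\odotG{\Gamma}(h_1,h_2)=\odotG{\Gamma}(h_1',h_2')$ then $(h_1(a),h_2(b))=(h_1'(a),h_2'(b))$ for all $a,b$, whence $h_1=h_1'$ and $h_2=h_2'$.

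The substance of the argument is essential surjectivity. Here I would prove that the class $\mathcal C:=\{\PP\odotG{\Gamma}\Q\mid \PP,\Q\in\baseV\}$ is already closed under the operators $\ope H$, $\ope S$ and $\ope P$; since it is closed under isomorphism, contains the generators of $\duplicateV$, and is contained in $\duplicateV$, it is then a variety squeezed between $\mathbb V(\mathcal C)=\duplicateV$ and $\duplicateV$, so $\mathcal C=\duplicateV$ and every member of $\duplicateV$ is isomorphic to a product $\PP\odotG{\Gamma}\Q$ with $\PP,\Q\in\baseV$. Closure under $\ope P$ is a routine reshuffling: by (D) the product $\prod_i(\PP_i\odotG{\Gamma}\Q_i)$ is isomorphic to $\bigl(\prod_i\PP_i\bigr)\odotG{\Gamma}\bigl(\prod_i\Q_i\bigr)$. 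Closure under $\ope S$ is where $v$ earns its keep: if $\M_0$ is a subalgebra of $\PP\odotG{\Gamma}\Q$ and $(a,b),(c,d)\in M_0$, then $v((a,b),(c,d))=(a,d)\in M_0$, so $M_0=\pi_1(M_0)\times\pi_2(M_0)$ is rectangular; each projection commutes with the $\Gamma$-operations and hence, through (L), with every $\Sigma$-operation, so $\pi_1(M_0)$ and $\pi_2(M_0)$ are $\baseV$-subalgebras, and (D) applied to the restricted operations yields $\M_0\cong\pi_1(M_0)\odotG{\Gamma}\pi_2(M_0)$.

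For closure under $\ope H$ I would show that every congruence $\theta$ of $\M=\PP\odotG{\Gamma}\Q$ is a product congruence. Compatibility of $\theta$ with $v$ gives, from $(a,b)\mathrel\theta(c,d)$, both $(a,f)\mathrel\theta(c,f)$ for every $f$ and $(e,b)\mathrel\theta(e,d)$ for every $e$; this lets one define $\theta_1$ on $\PP$ and $\theta_2$ on $\Q$ by $a\mathrel{\theta_1}c\iff(a,f)\mathrel\theta(c,f)$ for some (equivalently every) $f$, and dually for $\theta_2$. Using (D) to track the two coordinates separately one checks that $\theta_1,\theta_2$ are $\baseV$-congruences and that $\theta=\theta_1\times\theta_2$, whence $\M/\theta\cong(\PP/\theta_1)\odotG{\Gamma}(\Q/\theta_2)$ with both factors in $\baseV$. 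This $\ope H$-step is the main obstacle: verifying that $\theta_1$ and $\theta_2$ respect the $\Sigma$-operations requires the coordinate bookkeeping that the second coordinate of an operation output is $r_2(f,\dots,f)$ rather than $f$, and it is precisely here that the argument is genuinely more delicate than in the single-variable case.

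Finally, fullness too rests on $v$. Given a $\duplicateV$-morphism $\phi\colon\PP\odotG{\Gamma}\Q\to\PP'\odotG{\Gamma}\Q'$, preservation of $v$ forces the first component of $\phi(a,b)$ to be independent of $b$ and the second to be independent of $a$; writing these as $h_1(a)$ and $h_2(b)$ gives $\phi=\odotG{\Gamma}(h_1,h_2)$, while (L) together with (D) guarantees that $h_1$ and $h_2$ preserve the $\Sigma$-operations, so $h_1\in\baseV(\PP,\PP')$ and $h_2\in\baseV(\Q,\Q')$. Being full, faithful and essentially surjective, $\odotG{\Gamma}$ is a categorical equivalence between $\baseV\times\baseV$ and $\duplicateV$.
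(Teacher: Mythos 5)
The paper does not actually prove this theorem: it is imported verbatim from \cite[Theorem~9.3]{prod}, so there is no internal proof to compare against. Judged on its own merits, your argument is correct, and it is the natural two-sorted analogue of the proof of Theorem~\ref{thm:GeneralProductEquiv}: faithfulness and fullness are extracted from the term $v$ of (M), and essential surjectivity follows from showing the class $\{\PP\odotG{\Gamma}\Q\mid\PP,\Q\in\baseV\}$ is closed under $\ope H$, $\ope S$ and $\ope P$. The term $v$ does exactly the work you assign it (rectangularity of subuniverses, product decomposition of congruences, splitting of morphisms into components), and your handling of the one genuinely delicate point---compatibility of $\theta_1,\theta_2$ with the $\Sigma$-operations, where the second-coordinate witness after applying an operation is $r_2(f,\dots,f)$ rather than $f$ itself---is exactly right.

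Two places deserve tightening, though neither is a genuine gap. First, condition (D) is stated only for the basic pairs $(t_1,t_2)\in\Gamma$, whereas you repeatedly use that \emph{every} $\Gamma$-term acts coordinatewise, with first coordinate a $\Sigma$-term in the odd variables and second coordinate a $\Sigma$-term in the even ones; this follows by structural induction on $\Gamma$-terms and should be recorded, because it is also what upgrades (L): as stated, (L) only says that the first-coordinate $\Sigma$-term $s_1$ of the chosen $\Gamma$-term agrees with $f$ on each $\N\in\CN$, and you need the identity $s_1\approx f$ to hold throughout $\baseV=\mathbb V(\CN)$ (it does, since identities persist under $\ope H$, $\ope S$ and $\ope P$) before you may apply it to arbitrary $\PP,\Q\in\baseV$ in the fullness, $\ope S$- and $\ope H$-steps. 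Second, in the $\ope S$-step the phrase ``each projection commutes with the $\Gamma$-operations'' is not quite the right assertion---$\pi_1$ is not a morphism of $\Gamma$-algebras---what you need, and what the preceding remark supplies, is that $\pi_1(M_0)$ is closed under each $f^{\PP}$ because $f$ arises as the first-coordinate action of some $\Gamma$-term. (You also tacitly use nonemptiness of universes when choosing witnesses $f\in Q$; under the standard convention that algebras are nonempty this is harmless.) With these glosses your proof is complete.
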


We  move on to consider dualities for duplicated varieties.
For simplicity we shall first assume that the base variety
$\baseV = \ISP(\N)$ has a single-sorted duality 
with  alter ego $\twiddle{\N}= \left( N ; G,R,\Tp\right)$. 
Our next task is to   determine
 a set of generators for $\duplicateV$ as a prevariety.
We denote the trivial algebra by~$\T$. For 
$\C\in \baseV$ let $f^{*}_{C}\colon C\to T$ 
be the unique homomorphism from $\C$ into $\T$. 

\begin{lem}
If $\CB = \ISP(\N)=\mathbb{V}(\N)$ for some algebra $\N$, then
\[
\duplicateV=
\mathbb{V}(\{\PP\odotG{\Gamma}\Q\mid \PP,\Q\in\ISP(\N)\})=
\ISP(\N\odotG{\Gamma} \T,  \T \odotG{\Gamma} \N). 
\]
\end{lem}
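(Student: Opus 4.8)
The plan is to prove the displayed equality by establishing the two inclusions, with the categorical equivalence $F := \odotG{\Gamma}\colon \baseV \times \baseV \to \duplicateV$ of Theorem~\ref{thm:ICP} doing the heavy lifting. Throughout I use that, by hypothesis, $\baseV = \ISP(\N) = \mathbb{V}(\N)$, and that the trivial algebra $\T$ lies in $\baseV$. The inclusion $\ISP(\N \odotG{\Gamma}\T, \T \odotG{\Gamma}\N) \subseteq \duplicateV$ is immediate: both $\N \odotG{\Gamma}\T$ and $\T \odotG{\Gamma}\N$ have the form $\PP \odotG{\Gamma}\Q$ with $\PP, \Q \in \ISP(\N)$, so they lie in the variety $\duplicateV = \mathbb{V}(\{\PP \odotG{\Gamma}\Q\})$; as a variety is closed under $\ISP$, the prevariety they generate is contained in $\duplicateV$.

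For the forward inclusion the decisive step is to exploit that an equivalence is essentially surjective: every $\M \in \duplicateV$ is isomorphic to $F(\PP, \Q) = \PP \odotG{\Gamma}\Q$ for some $\PP, \Q \in \baseV = \ISP(\N)$. This lets me represent every algebra of $\duplicateV$ --- in particular those arising as homomorphic images --- directly as a product $\PP \odotG{\Gamma}\Q$, so that I never have to close $\ISP(\N \odotG{\Gamma}\T, \T \odotG{\Gamma}\N)$ under homomorphic images directly. I would then record the decomposition $\PP \odotG{\Gamma}\Q \cong (\PP \odotG{\Gamma}\T) \times (\T \odotG{\Gamma}\Q)$. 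This is precisely where condition~{\upshape (D)} is used: it forces each operation $[t_1,t_2]$ to act on the two coordinates independently (via $r_1$ on the first and $r_2$ on the second), so $\PP \odotG{\Gamma}\Q$ is a genuine product; equivalently, one reads it off the isomorphism $(\PP, \Q) \cong (\PP, \T) \times (\T, \Q)$ in $\baseV \times \baseV$ together with the fact that $F$ preserves products.

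It then remains to verify $\PP \odotG{\Gamma}\T \in \ISP(\N \odotG{\Gamma}\T)$ and, symmetrically, $\T \odotG{\Gamma}\Q \in \ISP(\T \odotG{\Gamma}\N)$. For the first I would observe that $\PP \mapsto \PP \odotG{\Gamma}\T$ is the composite of $(-,\T)\colon \baseV \to \baseV \times \baseV$ with $F$, and that both factors preserve embeddings and arbitrary products (for $(-,\T)$ this uses $\prod_I \T \cong \T$; for $F$ it is part of being an equivalence). Hence $\PP \hookrightarrow \N^I$ yields $\PP \odotG{\Gamma}\T \hookrightarrow (\N \odotG{\Gamma}\T)^I$, so $\PP \odotG{\Gamma}\T \in \ISP(\N \odotG{\Gamma}\T)$. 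Combining the pieces, $\M \cong (\PP \odotG{\Gamma}\T) \times (\T \odotG{\Gamma}\Q) \in \ISP(\N \odotG{\Gamma}\T, \T \odotG{\Gamma}\N)$, giving $\duplicateV \subseteq \ISP(\N \odotG{\Gamma}\T, \T \odotG{\Gamma}\N)$ and hence equality. The one genuinely delicate point is the essential-surjectivity move, which dissolves the apparent gap between the variety $\duplicateV$ and a mere prevariety; the remaining steps are routine preservation checks.
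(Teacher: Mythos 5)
Your proof is correct, and it shares with the paper the crucial opening move---invoking the essential surjectivity of the equivalence of Theorem~\ref{thm:ICP} to write an arbitrary $\A\in\duplicateV$ as $\B\odotG{\Gamma}\C$ with $\B,\C\in\baseV=\ISP(\N)$---but the mechanism you use to place $\B\odotG{\Gamma}\C$ in $\ISP(\N\odotG{\Gamma}\T,\,\T\odotG{\Gamma}\N)$ is genuinely different. The paper separates points: given $a=(a_1,a_2)\neq b=(b_1,b_2)$, say with $a_1\neq b_1$, it picks $h\colon\B\to\N$ with $h(a_1)\neq h(b_1)$ (available because $\baseV=\ISP(\N)$) and checks that $h\odotG{\Gamma}f^{*}_{C}\colon\B\odotG{\Gamma}\C\to\N\odotG{\Gamma}\T$ separates $a$ from $b$, after which membership in the prevariety follows from the separation criterion \cite[Theorem~1.3.1]{CD98} already used for Proposition~\ref{prop:Dock}. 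You instead establish the structural isomorphism $\B\odotG{\Gamma}\C\cong(\B\odotG{\Gamma}\T)\times(\T\odotG{\Gamma}\C)$---correctly pinpointing condition~(D) as what makes each operation $[t_1,t_2]$ act coordinatewise via $r_1$ and $r_2$---and then embed each factor into a power of the corresponding generator using preservation of embeddings and products by $(-,\T)$ and by the equivalence. The two arguments are close cousins: the paper's separating map $h\odotG{\Gamma}f^{*}_{C}$ is precisely one coordinate of your embedding $\B\odotG{\Gamma}\T\hookrightarrow(\N\odotG{\Gamma}\T)^I$, so your proof repackages the paper's family of separating homomorphisms as a single subdirect decomposition. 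What your route buys is the explicit product splitting of every algebra in $\duplicateV$, a fact of independent interest in the pre-bilattice setting; what the paper's route buys is brevity---no verification of the preservation properties, and no need to handle the trivial-algebra and empty-index edge cases, which your argument quietly absorbs via the convention that empty powers yield trivial algebras.
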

\begin{proof}
Let $\A\in\duplicateV$ and $a\neq b\in A$. 
By Theorem~\ref{thm:ICP}, 
we may assume that there exist $\B,\C\in\baseV$ such that $\A= \B\odotG{\Gamma}\C$. 
Let 
$a_1,b_1\in B$ and $a_2,b_2\in C$ such that 
$a=(a_1,a_2)$ and $b=(b_1,b_2)$. 
By simmetry we may assume that $a_1\neq b_1$. 
Then there exists a homomorphism 
$h\colon \B\to \N$ such that $h(a_1)\neq h(b_1)$.  
Now 
$h\odotG{\Gamma} f^{*}_{C}\colon \B\odotG{\Gamma} \C \to \N\odotG{\Gamma} \T$ 
 is such that
\[
(h\odotG{\Gamma} f^{*}_{C})(a)=(h(a_1),f^{*}_{C}(a_2))
\neq 
(h(b_1),f^{*}_{C}(b_2))=(h\odotG{\Gamma} f^{*}_{C})(b).\qedhere\]
\end{proof}

Let $\CM=\{\N\odotG{\Gamma} \T,  \T \odotG{\Gamma} \N\}$.
We now `double up'  $\twiddle{\CN}$ in the obvious way. 
Let
$\twiddle{\CN}\uplus\twiddle{\CN}= \left(  N_1\dotcup N_2; G_1,G_2,R_1,R_2,\Tp\right)$,  
 based on disjointified universes $N_1$ and $N_2$, such that 
 $(N_i;G_i,R_i,\Tp{\upharpoonright}_{N_i})$ is isomorphic to 
 $\twiddle{\CN}$ for $i=1,2$. 
Identify $N_1$ with $N\times T$  and $N_2$ with $T\times N$ 
and define $\twiddle{\CM}=\twiddle{\CN}\uplus\twiddle{\CN}$.

We now present 
our transfer theorem for natural dualities associated with 
Theorem~\ref{thm:ICP} (the single-sorted case).
Its proof is largely a diagram-chase with functors.  
Below, $\fnt{Id}_{\cat C}$ denotes  
the identity functor on a category $\cat C$
and~$\cong$ is used to denote 
natural isomorphism.

\begin{thm} \label{thm:transfer-D}
Let $\N$ be a  $\Sigma$-algebra and 
assume that $\Gamma$ satisfies {\upshape (L), (M)} and {\upshape (D)} 
relative to $\N$.
Assume that $\twiddle{\N}= (N;G,R,\Tp)$ yields a 
 duality on $\baseV= \ISP(\N)=\mathbb{V}(\N)$ with dual category
 $\CY = \IScP(\twiddle{\N})$.  
Let~$\CM$ and~$\twiddle{\CM}$ be defined as above.
 Then $\twiddle{\CM}$ yields a multisorted duality for  
$\duplicateV= \ISP(\CM)=\mathbb{V}(\PP\odotG{\Gamma}\Q\mid \PP,\Q\in\mathbb{V}(\CN))$  
for which the dual category  is $\CX \cong \CY \times\CY$.  
If the  duality for~$\baseV$  is full, respectively strong, 
then the same is true of that for~$\duplicateV$. 
\end{thm}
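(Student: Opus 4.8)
The plan is to reduce the whole statement to the base duality by composing two categorical equivalences. On the algebra side Theorem~\ref{thm:ICP} supplies $\odotG{\Gamma}\colon\baseV\times\baseV\to\duplicateV$; I would build a matching equivalence $\Theta\colon\CY\times\CY\to\CX$ on the dual side, identify the multisorted duality functors for $\duplicateV$ with the product functors $\D_\baseV\times\D_\baseV$ and $\E_\baseV\times\E_\baseV$ under these equivalences, and then let the duality, fullness and strongness transfer. This runs exactly parallel to the proof of Theorem~\ref{thm:TransferNaturalDualities}, with the one-variable functor replaced by the two-variable $\odotG{\Gamma}$ and single-sorted structures replaced by two-sorted ones. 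To obtain $\Theta$, observe that $\twiddle{\CM}$ is two disjoint copies of $\twiddle{\N}$ carrying no relations or operations between its sorts $N_1$ and $N_2$. Hence a closed substructure of a power $\twiddle{\CM}^{S}$ splits sortwise: its $N_1$-part is a closed substructure of $\twiddle{\N}^{S}$, its $N_2$-part another, and the two are independent. Morphisms, being sort- and structure-preserving, split into pairs in the same way, so $\IScP(\twiddle{\CM})\cong\CY\times\CY$, with $\Theta$ placing its two arguments on the two sorts.

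Next I would pin down the dual functor. Fix $\A\in\duplicateV$; since $\odotG{\Gamma}$ is an equivalence we may take $\A=\B\odotG{\Gamma}\C$. The two sorts of $\D_\duplicateV(\A)$ are $\duplicateV(\A,\N\odotG{\Gamma}\T)$ and $\duplicateV(\A,\T\odotG{\Gamma}\N)$. By the equivalence of Theorem~\ref{thm:ICP}, a morphism $\B\odotG{\Gamma}\C\to\N\odotG{\Gamma}\T$ is $\odotG{\Gamma}(h_1,h_2)$ for a unique pair with $h_1\in\baseV(\B,\N)$ and $h_2\in\baseV(\C,\T)$; since $\T$ is the trivial algebra, $h_2$ is forced, so this sort is in natural bijection with $\D_\baseV(\B)$. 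Symmetrically the second sort is $\D_\baseV(\C)$. Because the structure of $\twiddle{\CM}$ is confined to each sort and is a copy of that of $\twiddle{\N}$, these bijections preserve the relations, operations and topology, so $\D_\duplicateV(\A)\cong\Theta(\D_\baseV(\B),\D_\baseV(\C))$. Naturality in $(\B,\C)$, together with the analogous computation for $\E_\duplicateV$, yields natural isomorphisms $\D_\duplicateV\circ\odotG{\Gamma}\cong\Theta\circ(\D_\baseV\times\D_\baseV)$ and its companion for $\E$.

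From here the conclusions follow formally. The product of the base duality is a duality between $\baseV\times\baseV$ and $\CY\times\CY$ whose evaluation maps are isomorphisms because the base ones are. Transporting this along the equivalences $\odotG{\Gamma}$ and $\Theta$ and invoking the natural isomorphisms just established shows that each $e_\A\colon\A\to\E_\duplicateV\D_\duplicateV(\A)$ is an isomorphism, so $\twiddle{\CM}$ yields a multisorted duality on $\duplicateV$ with dual category $\CX\cong\CY\times\CY$. Fullness transfers identically, through the maps $\epsilon_\X$. For strongness I would use that $\twiddle{\CM}$ corresponds under $\Theta$ to the pair $(\twiddle{\N},\twiddle{\N})$, which is injective in $\CY\times\CY$ exactly when $\twiddle{\N}$ is injective in $\CY$; the multisorted form of \cite[Theorem~3.2.4]{CD98} (see also \cite[Chapter~7]{CD98}) then delivers strongness for $\duplicateV$ from strongness for $\baseV$.

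I expect the only real work to lie in the middle step: confirming that the sortwise hom-set bijections are genuine isomorphisms of topological structures, respecting $R$, $G$ and $\Tp$, and checking the lifting of the topological side conditions in the case where $N$ is infinite. This is precisely the routine continuity argument carried out in the proof of Theorem~\ref{thm:TransferNaturalDualities}, now performed one sort at a time. The conceptual ingredients, namely the decoupling of the two sorts and the collapse of one hom-set forced by triviality of $\T$, are simple; the care needed is only in verifying that nothing is lost in passing through the chain of identifications.
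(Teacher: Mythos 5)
Your proposal is correct and takes essentially the same approach as the paper's proof: the paper likewise establishes the sortwise equivalence $\uplus\colon\CY\times\CY\to\CX$ (with inverse $\fnt F$), computes the two sorts of $\D_{\duplicateV}(\B\odotG{\Gamma}\C)$ as $\baseV(\B,\N)\times\{f^{*}_{C}\}$ and $\{f^{*}_{B}\}\times\baseV(\C,\N)$ using the collapse forced by the trivial algebra $\T$, verifies the natural isomorphisms $\fnt F\circ\D_{\duplicateV}\circ\odotG{\Gamma}\cong\D_{\baseV}\times\D_{\baseV}$ and $\E_{\duplicateV}\circ\uplus\cong(\E_{\baseV}\times\E_{\baseV})\circ\odotG{\Gamma}$, and then obtains duality, fullness and strongness by exactly the diagram chase and the injectivity transfer from $\twiddle{\N}$ to $(\twiddle{\N},\twiddle{\N})$ that you describe. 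No substantive difference remains to report.
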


\begin{proof}
Let $(\X_1,\X_2)\in\CY\times\CY=\IScP(\twiddle{\N})\times \IScP(\twiddle{\N})$. 
We identify this structure with
 $\X_1\uplus\X_2=(X_1\dotcup X_2;\,G_1,G_2,R_1,R_2,\Tp)$,
 where as before $\dotcup$ denotes disjoint union and 
the topology $\Tp$ 
is the union  of $\Tp_1$ and $\Tp_2$.
Morphisms in $\CX$ are maps 
$f\colon X_1 \dotcup  X_2 \to Y_1\dotcup Y_2 $ 
that respect the structure and 
are 
such that $f(x)\in Y_i$ when $x\in X_i$ and $i\in\{1,2\}$. 
 Hence    the assignment: 
\begin{alignat*}{3}
&\text{on objects:} & \hspace*{2.5cm}  &(\X_1,\X_2)\mapsto  \X_1\uplus\X_2 , 
  \hspace*{2cm}  \phantom{\text{on objects:}}&&\\
&\text{on morphisms:}  & & (f,g)  \mapsto f\dotcup g 
\end{alignat*} 
sets up a categorical equivalence, $\uplus$.
Let 
 $\fnt F\colon \CX\to \CY\times \CY$ denote 
its inverse.

\begin{figure}  [ht]
\begin{center}
\begin{tikzpicture} 
[auto,
 move up/.style=   {transform canvas={yshift=1.9pt}},
 move down/.style= {transform canvas={yshift=-1.9pt}},
 move left/.style= {transform canvas={xshift=-2.5pt}},
 move right/.style={transform canvas={xshift=2.5pt}}] 
\matrix[row sep= 1.3cm, column sep= 1.25cm]
{ 
 \node (B) {$\baseV\times \baseV$}; &&\node (Y){$\CY\times \CY$}; & \node (BB) {$\baseV\times \baseV$}; &&\node (YY){$\CY\times \CY$};
 
\\ 
 \node (A) {$\duplicateV$}; && \node (X) {$\CX$}; &  \node (AA) {$\duplicateV$}; && \node (XX) {$\CX$};
\\
};

\draw [-latex ] (B) to node    {$\D_{\baseV}\times\D_{\baseV}$}(Y);
\draw [latex-] (B) to node [swap]  {$\fnt{C}$} (A);
\draw [-latex] (A) to node  [swap]  {$\D_{\duplicateV}$}(X);
\draw [-latex] (Y) to node  {$\uplus$}(X);
\draw [latex-] (BB) to node {$\E_{\baseV}\times\E_{\baseV}$}(YY);
\draw [-latex] (BB) to node  [swap] {$\odotG{\Gamma}$}(AA);
\draw [latex-] (AA) to node [swap] {$\E_{\duplicateV}$}(XX);
\draw [latex-] (YY) to node {$\fnt{F}$} (XX);

\end{tikzpicture}
\end{center}
\vspace{-15pt}
\caption{Natural duality by duplication}\label{Fig:NatDual}
\end{figure}
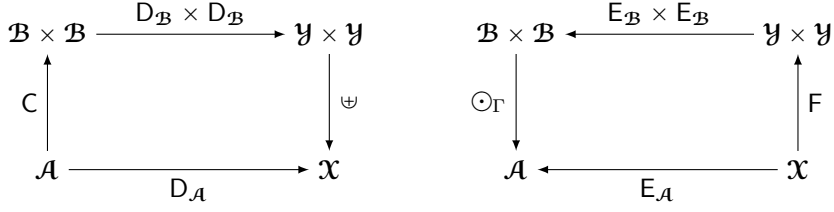 

Identify 
$\N\odotG{\Gamma}\T$ and $\T \odotG{\Gamma}\N$ with $N_1$ and $N_2$ respectively. 
One sees 
that $\twiddle{\CM} \coloneqq 
\twiddle{\CN}\uplus \twiddle{\CN}=
\left(  N_1\dotcup 
N_2; G_1,G_2,R_1,R_2,\Tp\right)$ is a  legitimate  alter ego for $\CM$.
Let $\D_{\baseV}\colon\baseV\to\CY$ and $\E_{\baseV}\colon\CY\to \baseV$,
and 
 $\D_{\duplicateV}\colon\duplicateV\to\CX$ and $\E_{\duplicateV}\colon\CX\to \duplicateV$ 
 be the hom-functors   
 determined by  $\twiddle{\CN}$ and $\twiddle{\CM}$ respectively.  
 By Theorem~\ref{thm:ICP}, there exists a functor 
  $\fnt C\colon \duplicateV\to \baseV\times\baseV$ 
  that together with 
 $\odotG{\Gamma}\colon  \baseV\times\baseV \to \duplicateV$ determines
 a categorical equivalence. 
 Take $\A,\B\in\baseV$ and let 
\[
\D_{\duplicateV}(\A\odotG{\Gamma}\B)=(X_1\dotcup 
X_2;G_1,G_2,R_1,R_2,\Tp).
\]
Again by Theorem~\ref{thm:ICP}, 
\[
X_1  =\duplicateV(\A\odotG{\Gamma}\!\B,\N\odotG{\Gamma}\!\T)= \{(h_A,f_B^{*}) \mid h_A\in\baseV( \A, \N )\} 
=\baseV( \A, \N )\times \{f^{*}_B\},\\ 
\]
and likewise $X_2  = \{f^{*}_A\} \times  \baseV( \B,\N)$.

 For an $n$-ary relation $r\in R$,
  let $r_i^{\A\odotG{\Gamma}\B}$ be the corresponding relation in 
 $R_i^{\A\odotG{\Gamma}\B}\subseteq X_i^n$  ($i=\{1,2\}$).
So $(h_1,\ldots, h_n)\in r_1^{\A\odotG{\Gamma}\B} $ if and only if 
 $h_i=(g_{i},f^{*}_{B})\in \baseV( \A, \N )\times \{f^{*}_B\} $ for 
$i\in\{1,\ldots, n\}$ and $(g_1,\ldots,g_n)\in r^{\A}$.
 Similarly, a tuple $(h_1,\ldots, h_n)$ belongs to 
$r_2^{\A\odotG{\Gamma}\B} $ if and only if 
 $h_i=(f^{*}_{A},g_{i})\in  \{f^{*}_A\} \times  \baseV( \B, \N ) $ for 
 $i\in\{1,\ldots, n\}$ and $(g_1,\ldots,g_n)\in r^{\B}$. 
The same argument applied to $G$ proves that 
$(X_1;G_1,R_1,\Tp{\upharpoonright}_{X_1})$ and $(X_2;G_2,R_2,\Tp{\upharpoonright}_{X_2})$ 
are isomorphic to $\D_{\baseV}(\A)$ and $\D_{\baseV}(\B)$, respectively.
 Thus $\fnt F(\D_{\duplicateV}(\A\odotG{\Gamma}\B))$ is isomorphic to
  $(\D_{\baseV}(\A),\D_{\baseV}(\B))$  in $\CY\times\CY$.
 Moreover, it is easy to see that the assignment 
  $\fnt F(\D_{\duplicateV}(\A\odotG{\Gamma}\B))\mapsto(\D_{\baseV}(\A),\D_{\baseV}(\B))$ 
 determines a natural isomorphism between $\fnt F\circ \D_{\duplicateV}\circ \odotG{\Gamma}$ 
 and $\D_{\baseV}\times\D_{\baseV}\colon \baseV\times\baseV \to \CX\times\CX$.

Similarly, for each  $(\X,\Y)\in \CX\times\CX$, 
\begin{multline*} 
\E_{\duplicateV}(\X\uplus\Y)
=(\E_{\baseV}(\X)\odotG{\Gamma}\T) \times (\T\odotG{\Gamma}\E_{\baseV}(\Y))\\
\cong(\E_{\baseV}(\X) \times \T)\odotG{\Gamma}(\T\times\E_{\baseV}(\Y))
	     \cong \E_{\baseV}(\X)\odotG{\Gamma}\E_{\baseV}(\Y).
\end{multline*}
Moreover, the assignment  
$\E_{\duplicateV}(\X\uplus\Y)\mapsto  \E_{\baseV}(\X)\odotG{\Gamma}\E_{\baseV}(\Y)$ 
is natural in $\X$ and $\Y$, that is, 
$\E_{\duplicateV}\circ \uplus
\cong 
(\E_{\baseV}\times \E_{\baseV})\circ \odotG{\Gamma}$.

So (up to  natural isomorphism) 
the diagrams in Figure~\ref{Fig:NatDual} commute.
A symbol-chase now confirms  that 
 $\twiddle{\M}$ 
 dualises~$\M$ because~$\twiddle{\N}$ dualises~$\N$:
\begin{multline*}
\hspace*{-.25cm}\E_{\duplicateV}\circ\D_{\duplicateV}
\cong\
\odotG{\Gamma}\circ\ (\E_{\baseV}\times\E_{\baseV})\circ\fnt F\circ\uplus\circ\, (\D_{\baseV}\times\D_{\baseV})\circ\fnt{C}= \\
\hspace*{.3cm}\odotG{\Gamma}\circ\ (\E_{\baseV}\times\E_{\baseV})\circ\, (\D_{\baseV}\times\D_{\baseV})\circ\fnt{C} 
\cong\odotG{\Gamma}\circ\ (\fnt{Id}_{\baseV}\times\fnt{Id}_{\baseV})\circ\fnt{C} 
=\ \odotG{\Gamma}\circ\,\fnt{C} 
\cong \fnt{Id}_{\duplicateV}.
\end{multline*}

 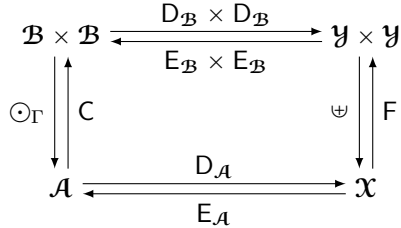
\begin{figure}  [ht]
\begin{center}
\begin{tikzpicture} 
[auto,
 text depth=0.25ex,
 move up/.style=   {transform canvas={yshift=1.9pt}},
 move down/.style= {transform canvas={yshift=-1.9pt}},
 move left/.style= {transform canvas={xshift=-2.5pt}},
 move right/.style={transform canvas={xshift=2.5pt}}] 

\matrix[row sep= 1.5cm, column sep= 1.4cm]
{ 
 \node (B) {$\baseV\times \baseV$}; &&   \node (Y){$\CY\times \CY$}; 
\\ 
 \node (A) {$\duplicateV$};                &&    \node (X) {$\CX$};
\\
};

\draw [-latex , move up] (B) to node  [yshift=-1pt]  {$\D_{\baseV}\times\D_{\baseV}$}(Y);
\draw [latex-, move down] (B) to node [swap] {$\E_{\baseV}\times\E_{\baseV}$}(Y);
\draw [-latex, move left] (B) to node  [swap] {$\odotG{\Gamma}$}(A);
\draw [latex-, move right] (B) to node {$\fnt{C}$} (A);
\draw [-latex , move up] (A) to node  [yshift=-1pt]  {$\D_{\duplicateV}$}(X);
\draw [latex-, move down] (A) to node [swap] {$\E_{\duplicateV}$}(X);
\draw [-latex, move left] (Y) to node  [swap] {$\uplus$}(X);
\draw [latex-, move right] (Y) to node {$\fnt{F}$} (X);
\end{tikzpicture}
\end{center}
\vspace{-15pt}
\caption{Full duality by duplication}\label{Fig:FullDual}
\end{figure}

Assume that $\twiddle{\N}$ yields a full duality. 
 Then   the diagram in Figure~\ref{Fig:FullDual} commutes.
We can easily prove that 
$\D_{\duplicateV}\circ\E_{\duplicateV} \cong \fnt{Id}_{\CX}$,
that is, $\twiddle{\CM}$ yields a full duality.
Moreover, if   $\twiddle{\N}$ is injective in $\CY$ 
then $(\twiddle{\N},\twiddle{\N})$ is injective in $\CY\times\CY$, 
or equivalently 
$\twiddle{\CM}=\twiddle{\N}\uplus\twiddle{\N}$ 
is injective in $\CX$. 
Hence $\twiddle{\CM}$ yields a strong  duality if $\CN$ does.
\end{proof}

Theorem~\ref{thm:transfer-D} applies 
to the variety $p\DBU$ of 
(unbounded) distributive pre-bilattices.
Its members are algebras
$\A = (A; \lor_t,\land_t,\lor_k,\land_k)$ 
 for which 
$(A;\lor_t,\land_t)\in \CCD_u$ and $(A;\lor_k,\land_k)\in \CCD_u$.
The well-known product 
representation for~$p\DBU$ comes  from the observation 
that the set
\[
\Gamma_{p\DBU}=\{
	(\vee^{4}_{13},\wedge^{4}_{24}),
	(\wedge^{4}_{13},\vee^{4}_{24}),
	(\vee^{4}_{13},\vee^{4}_{24}),
	(\wedge^{4}_{13},\wedge^{4}_{24})\}  
\]
satisfies (L), (M) and (D) \cite[Section~9]{prod}.
Since $\twiddle{\two}_u $ strongly dualises  $\CCD_u$, the structure 
 $\twiddle{\two}_u \uplus\twiddle{\two}_u $
determines a  multisorted strong duality for $p\DBU$. 
This was established by different techniques in~\cite[Theorem~10.2]{CPOne}.

Theorem~\ref{thm:transfer-D}  also yields dualities for distributive trilattices.  
 These are (to the best of our knowledge) new. 
As with pre-bilattices, we opt for the unbounded case. 
An unbounded distributive \emph{trilattice} is an algebra 
$(A;\vee_t ,\wedge_t ,\vee_f ,\wedge_f ,\vee_i ,\wedge_i)$
such that 
$(A;\vee_t ,\wedge_t)$, $(A;\vee_f ,\wedge_f)$ and 
$( A; \vee_i,\wedge_i)$ are distributive lattices. 
Let $\TL_u$ denote the variety of (unbounded) distributive trilattices.
An algebra $(A;\vee_t ,\wedge_t ,\vee_f ,\wedge_f ,\vee_i ,\wedge_i,-_{t})$ is  
a \emph{distributive trilattice with $t$-involution} if 
$(A;\vee_t ,\wedge_t ,\vee_f ,\wedge_f ,\vee_i ,\wedge_i)\in\TL_u$ and $-_{t}$ 
is an involution that preserves the $f$- and $i$-lattice operations and reverses $\vee_t$ and $\wedge_t$.
Let $\TL_{-_{t}}$ denote the variety of unbounded distributive trilattices with $t$-involution. 
Take $\DBU$ as the base variety and let 
\begin{multline*}
	\Gammavari{\TL_{-_{t}}}=\{  
		((\land_t)^4_{13},(\land_t)^4_{24}),
		((\lor_t)^4_{13},(\lor_t)^4_{24}),
		((\lor_k)^4_{13},(\land_k)^4_{24}),\\
		((\land_k)^4_{13},(\lor_k)^4_{24}),
		((\land_k)^4_{13},(\land_k)^4_{24}),
		((\lor_k)^4_{13},(\lor_k)^4_{24}),(\neg^{2}_{1},\neg^{2}_2) 
	\}.
\end{multline*}
Then $\Gammavari{\TL_{-_{t}}}$ satisfies (L), (M) and (D) over $\DBU$ 
(see \cite[Example~9.4]{CPOne}).  
In Section~\ref{NatViaDupl}, we used Theorem~\ref{thm:TransferNaturalDualities} 
to prove that $(\twiddle{\two}_u)^2$ yields a strong duality on $\DBU$. 
Now Theorem~\ref{thm:transfer-D} implies that
  $(\twiddle{\two}_u)^2\uplus (\twiddle{\two}_u)^2 $ 
determines a multisorted strong duality for unbounded 
distributive trilattices with  $t$-involution.

We can easily  adapt our results to cater for a base variety which  
admits a multisorted  duality rather than a single-sorted one. 
Predictably this leads to multisortedness at the duplicate level.
In the case of Theorem~\ref{thm:TransferNaturalDualities},
one obtains the  required alter ego by squaring the base level alter ego, 
sort by sort; as before, 
the base variety and its duplicate have the same dual category.
The extension of Theorem~\ref{thm:transfer-D} 
employs two disjoint copies of each sort of the base-level alter ego. 
The proofs of these results involve only
minor modifications of those for the single-sorted case.
As an example, the multisorted version of Theorem~\ref{thm:transfer-D} 
combined with the results in \cite[Example~9.4]{prod} 
leads to a strong duality for unbounded distributive trilattices
which has four sorts, obtained from the 
two-sorted duality for $p\DBU$.


\begin{thebibliography}{10} 



\bibitem{AA1}   
Arieli, O., Avron, A.: 
Reasoning with logical bilattices.  
Logic, Lang. Inform. \textbf{5} (1996), 25--63


\bibitem{ND3} Belnap,  N.D.:  A useful four-valued logic:  How a computer should think. 
In:   A.R. Anderson and  N.D. Belnap, Entailment.   The Logic of Relevance and Necessity,   vol. II ,  pp. 506--541, Princeton University Press (1992) 

\bibitem{BV} 
 Blyth, T., Varlet, J.: 
Ockham Algebras.  
Oxford University Press
(1994)

\bibitem{BJR11}   
Bou, F.,   Jansana, R.,  Rivieccio, U.: 
Varieties of  interlaced bilattices.
Algebra Universalis \textbf{66} (2011), 115--141

\bibitem{BR11}  
Bou, F.,  Rivieccio, U.: The logic of distributive bilattices.
Logic J.  IGPL \textbf{19}  (2011), 183--216


\bibitem{BS}
Burris, S.N., Sankappanavar, H.P.:
A Course in Universal Algebra.
Graduate Texts in Mathematics 
\textbf{78}.
Springer-Verlag (1981); 
Free download at \url{http://www.math.waterloo.ca/~snburris}

\bibitem{CPcop} Cabrer, L.M., Priestley,  H.A.: 
Coproducts of distributive lattice-based algebras.  
Algebra Universalis 
\textbf{72}  (2014), 251--286


\bibitem{CPOne} 
Cabrer,  L.M.,   Priestley, H.A.:  
Distributive bilattices from the perspective of natural duality theory.   
Algebra Universalis 
\textbf{73} (2015), 103--141 

\bibitem{prod} 
Cabrer,  L.M.,   Priestley, H.A.:   
A general framework for product representations: bilattices and beyond.  
Submitted (available at arXiv:1503.06921)

\bibitem{CD98}   
Clark, D.M.,   Davey, B.A.: 
Natural Dualities for the Working Algebraist.  
Cambridge University Press (1998)


\bibitem{D13}   
Davey, B.A.: 
The product representation theorem for interlaced pre-bilattices: some historical remarks. 
Algebra Universalis \textbf{70} (2013), 403--409   


\bibitem{pig} 
Davey, B.A. Haviar, M., Priestley, H.A.:  
Piggyback dualities revisited.  
Algebra Universalis (to appear), 
(available at arXiv:1501.02512v1)

\bibitem{DWpig}  
Davey,  B.A., Werner, H.: 
Piggyback-Dualit\"aten. 
Bull. Austral. Math. Soc. \textbf{32} (1985), 1--32  




\bibitem{FKlkids}
Fitting, M.:  
Kleene's three-valued logics and their children. 
Fund.  Inform. \textbf{20} (1994), 113--131
 
\bibitem{FitAnnProg} 
Fitting, M.: 
Annotated revision specification programs.  
In proceedings LPNR'95
Lecture Notes in Comp. Sci. \textbf{928} (1995), 143--155

\bibitem{Fnice}   
Fitting, M.: 
Bilattices are nice things.  
Self-reference, CSLI Lecture Notes
 \textbf{178}, pp.~53--77,
CSLI Publ., Stanford, CA, (2006) 


\bibitem{F97} Font, J.M.: Belnap's four-valued logic and De Morgan lattices.
Log. J. IGPL \textbf{5} (1997), 413--440

 
\bibitem{G99}  
Gargov, G.:  
Knowledge, uncertainty and ignorance:  bilattices and beyond.  
J.  Appl. Non-classical Logics 
\textbf{9} (1999), 195--283

\bibitem{Gin}  
Ginsberg, M. L.: 
Multivalued logics: A uniform approach to inference in artificial intelligence. 
Comput. Intelligence \textbf{4} (1988), 265--316 


\bibitem{GinMod} 
Ginsberg, M. L.: 
Bilattices and modal operators.
J. Logic Comput. \textbf{1} (1990), 41--69


\bibitem{Go83}  
Goldberg, M.S.:  
Topological duality for distributive Ockham algebras.  
Studia Logica 
\textbf{42} (1983),  23--31

\bibitem{JR}  Jung, A, Rivieccio, U.: 
Priestley duality for bilattices.
Studia Logica 
\textbf{100} (2012),  223--252 
 

\bibitem{MaTr} 
Marek, V. W., Truszczy\'{n}ski, M.: 
Revision specifications by means of programs.
In proceedings of JELIA'95,
Lecture Notes in Comp. Sci. \textbf{838} (1994). 122--136

\bibitem{MPSV}  
Mobasher, B., Pigozzi, D.,  Slutski, V., Voutsadakis, D.:   
A duality theory for bilattices.
Algebra Universalis 
\textbf{43} (2000), 109--125


\bibitem{OW15}
Odintsov, S.P., Wansing, H.: 
The logic of generalized truth values and the logic of bilattices.  
Studia Logia \textbf{103} (2015), 91--112

\bibitem{RF}
Ruet, P.,   Fages, F.: 
Combining explicit negation and negation by failure via Belnap's logic. 
Theoret, Comp. Sci. 
 \textbf{171} (1997),  61--75

\bibitem{SW11} 
Shramko, Y., Wansing H.: 
Truth and Falsehood. An Inquiry into Generalized Logical Values.
Springer  (2011)


\bibitem{Urq}  
Urquhart, A.:  
Distributive lattices with a dual homomorphic operation.  
Studia Logica 
\textbf{38} (1979), 201--209

\end{thebibliography}
\end{document}